\documentclass[
  11pt     %
]{article}

\setcounter{totalnumber}{50}
\setcounter{topnumber}{50}
\setcounter{bottomnumber}{50}

\usepackage{color}
\definecolor{black}{rgb}{0,0,0}

\definecolor{red}{rgb}{1,0,0}

\definecolor{blue}{rgb}{0,0,1}

\usepackage{amstext,amsthm,amssymb,amsmath}
\usepackage{bm}
\usepackage{graphicx}
\usepackage{subfigure}
\usepackage{wrapfig}
\usepackage{fullpage}
\usepackage{color}
\usepackage{multirow}
\usepackage{tabulary}
\usepackage{booktabs}
\usepackage{enumerate}
\usepackage{epstopdf}

\newtheorem{theorem}{Theorem}[section]
\newtheorem{lemma}[theorem]{Lemma}

\newtheorem{remark}[theorem]{Remark}
\newtheorem{Definition}[theorem]{Definition}
\newcommand{\DIV}{\mathrm{div}\,}
\newcommand{\DIVinThm}{\mathrm{div}\,}

\newcommand{\prnt}[1]{\left( #1 \right)}

\newcommand{\VECT}[1]{\mathbf{#1}}

\newcommand{\norm}[1]{\left\|#1\right\|}

\newcommand{\Tnorm}[2]{{|\!|\!| #1 |\!|\!|}_{{\text{V},#2}}}
\newcommand{\TnormLer}[2]{{\bigg{|\!\bigg|\!\bigg|} #1 \bigg{|\!\bigg|\!\bigg|\;}}_{{\text{V},#2}}}

\newcommand{\Snorm}[2]{{|\!|\!| #1 |\!|\!|}_{{\text{P},#2}}}

\newcommand{\normH}[2]{\norm{#1}_{H^1\prnt{#2}}}

\newcommand{\normL}[2]{\norm{#1}_{L^2\prnt{#2}}}

\newcommand{\normI}[2]{\norm{#1}_{L^{\infty}\prnt{#2}}}



\newcommand{\p}{\VECT{p}}


%

\newcommand{\Om}{\Omega}




\newcommand{\avrg}[2]{{\left \langle #1 \right \rangle}_{#2}}

\title{A Generalized Multiscale
Finite Element Method for the Brinkman Equation}

\author{ Guanglian Li\thanks{Department of Mathematics  and Institute for Scientific Computation (ISC),
Texas A\&M University,
College Station, Texas 77843-3368, USA}
\and Juan Galvis\thanks{Departamento de Matem\'{a}ticas, Universidad Nacional de Colombia,
Bogot\'a D.C., Colombia}  \and  Ke Shi\footnotemark[1]}

\begin{document}
\maketitle
%
\begin{abstract}
In this paper we consider the numerical upscaling of
the Brinkman equation in the presence of high-contrast permeability fields.
We develop and analyze a  robust and efficient Generalized Multiscale Finite
Element Method (GMsFEM) for the Brinkman model.
In the fine grid, we use mixed finite element method with the velocity and pressure being
continuous piecewise quadratic and  piecewise constant finite element spaces,
respectively.
Using the GMsFEM framework we construct suitable coarse-scale spaces for the velocity and pressure
that yield a robust mixed GMsFEM.
We develop a novel approach to construct a coarse approximation  for the velocity snapshot space and
a robust small offline space for the velocity space. The stability of the mixed GMsFEM and a priori error
estimates are derived. A variety of two-dimensional numerical examples are presented to illustrate the effectiveness of the algorithm.\\
\textbf{Keywords}: Brinkman equation, generalized multiscale finite element method, mixed finite element
\end{abstract}

\section{Introduction}

In this paper, we design and analyze an efficient numerical method based on the generalized multiscale finite
element method (GMsFEM) framework for Brinkman type system of partial differential equations in the
context of mixed finite element method. The Brinkman equation is widely accepted in
the mathematical modeling of flows in heterogeneous fields, e.g., vuggy carbonate
reservoirs, low porosity filtration devices and biomedical hydrodynamic studies \cite{GHL08, KV03}.
In these applications, the simple Darcy model is inadequate
to capture their essential physics \cite{ref:Iliev2011,ref:Popov&.2007.Anaheim} and the Darcy-Stokes interface model is not feasible since the
precise information about the location and geometry of the interface between vugs and the
porous matrix as well as experimentally determined values related to the interface conditions are inaccessible.
The Brinkman flow behaves like a Darcy flow and a Stokes flow for regions with very
large permeability values and with small permeability values, respectively. Hence, in comparison
with the popular Stokes-Darcy interface model, the Brinkman model can describe both a Stokes
and a Darcy flow but without using a complex interface condition. Hence, the accuracy and efficiency
of the Brinkman flow simulation is of significant practical interest \cite{EGLW12,DB87, VOA07, XXX08}. In our earlier work \cite{dels11},
we derived homogenization results for high-contrast Brinkman flow in a periodic permeability field. We showed that
the homogenization method can simplify the high-contrast periodic Brinkman model, and the resulting solution is a good approximation of
the original Brinkman model.

In this work, we investigate the high-contrast Brinkman flow in general permeability fields instead of the periodic fields as analysed in \cite{dels11}.
Often, model reduction techniques are required for efficiently
resolving such multiscale problems. These techniques all rely on a coarse grid approximation,
obtained by discretizing the problem on a coarse grid, much coarser than the fine grid,
and a suitable coarse-grid formulation of the problem. In the literature, several different approaches have
been proposed to obtain the coarse-grid formulation, which can be roughly divided into upscaling models
\cite{dur91, weh02} and multiscale methods (see, e.g., \cite{Arbogast_two_scale_04, Chu_Hou_MathComp_10,ee03,
egw10,eh09,ehg04, Wave} and the references therein). Among existing multiscale methods, the GMsFEM
framework \cite{egw10,egh12} of recent origin has demonstrated great promise; see \cite{egw10,GhommemJCP2013,
ReducedCon,MsDG,Wave,WaveGMsFEM, egh12} for methodological developments and extensive applications.
In the GMsFEM, the coarse grid problem is obtained by locally constructing reduced order models for the solution space
on coarse regions and then employing a global formulation on the resulting reduced space.

The Brinkman model can be written as
\begin{eqnarray*}
 \nabla p -\mu\Delta {u} +\kappa^{-1} {u} = &{f} \quad \mbox{ in } \Om,&\\
\mbox{div}\,{u}= &0\quad \mbox{ in } \Om,&
\end{eqnarray*}
where $p$ is the fluid pressure and $u$ represents the velocity. Here, $\mu$ is the viscocity and
$\kappa=\kappa(x)$ is a heterogeneous multiscale coefficient that models the permeability
of the porous medium. We assume that the variations of $\kappa$ occur within a very fine
scale and therefore a direct simulation of this model is costly. As mentioned above,
one of the main advantages of the Brinkman model is that it can capture Stokes and Darcy type flow behavior depending
on the value of $\kappa$ without the usage of a complex interface condition as needed in the Stokes-Darcy interface model. This is very
convenient when modeling complicated porous scenarios such as a vuggy medium.
However, this advantage of the Brinkman model does not come for free: it brings the challenge of effectively designing
numerical homogenization or upscaling methodologies since the resulting upscaling
method must capture the correct flow behavior in corresponding regions. This difficulty
increases in the case of  high-contrast coefficients due to the fact that, in
a single coarse region, the permeability field can have variations of several order of magnitude that make it difficult
to compute effective parameters  for the permeability or boundary conditions using classical
multiscale finite element methods.
In this work, we develop an efficient (multiscale) solver based on the GMsFEM framework \cite{egh12} for the
Brinkman flow in heterogeneous high-contrast permeability fields. In this framework, as in many
other multiscale model reduction techniques, one divides the computation into two stages, i.e.,
the offline stage and the online stage. In the offline stage, a reduced dimension space is constructed,
and it is then used in the online stage to construct multiscale basis functions. These multiscale basis
functions can be re-used for any input parameter to solve the problem on a coarse grid. The main
idea behind the construction of the offline and online spaces is to design appropriate local
spectral-based selection of important modes that generate the snapshot space. In \cite{egh12},
several general strategies for designing the local spectrum-based selection procedures were proposed.
In this work, we focus on the generation of snapshots spaces, and rigorous convergence analysis of the resulting
coarse approximation. Further, we establish stability estimate of the mixed GMsFEM (in the form of inf-sup
conditions) for the proposed reduced dimension spaces. The convergence analysis extends that for elliptic equations with high-contrast coefficients \cite{egw10}.

We present several numerical examples to illustrate the performance of the proposed approach.
In particular, four different high-contrast multiscale permeability fields, which are
representative of Brinkman flow scenarios: Darcy flow in high-contrast regions composed of channels
and inclusions, Darcy flow in high-contrast regions composed of background, Stokes flow in
high-contrast regions composed of channels and inclusions, and Stokes flow in high-contrast
regions composed of background. All the numerical results indicate that the proposed GMsFEM
is robust and accurate.

The rest of the paper is organized as follows. In Section \ref{prelim}, we present
preliminaries on the Brinkman model and the GMsFEM. The construction of the coarse
spaces for the GMsFEM is displayed in Section \ref{sec:off}.
In Section \ref{sec:numerical}, numerical results for several representative examples are showed.
The proofs of our main results, including stability and a priori error estimates, are exhibited in Section \ref{Analysis}.
Finally, we conclude our paper with some remarks in Section \ref{sec:conclusion}. 
\section{Preliminaries}
\label{prelim}
Now we describe the Brinkman model in a more detailed manner. Let $\Omega$ be a polygonal domain in $\mathbb{R}^d$
($d = 2, 3$) with a boundary $\partial\Omega$. Then the Brinkman model reads: find $({u},p)\in
 {H}^{1}(\Om))^d\times L^{2}_{0}(\Om)$) such that
\begin{subequations}\label{eq:Brinkman.fs}
\begin{align}
\label{eq:BrinkmanMomentum.fs}
 \nabla p-\mu\Delta {u}+\kappa^{-1} {u} = &{f}\quad \mbox{ in } \Om,&\\
\label{eq:BrinkmanDiv.fs}
\mbox{div}\,{u}= &0\quad\mbox{ in } \Om,&\\
\label{eq:Brinkman.bc} {u}= &{g}\quad\, \mbox{ on } \partial \Om.&
\end{align}
\end{subequations}
Here the source term $f \in (L^2(\Omega))^d$, the boundary condition
${g} \in ({H}^{\frac12}(\partial \Omega))^d$, and $\kappa^{-1}$ is a positive
definite heterogeneous tensor field with high-contrast. Without loss of
generality, we assume the viscosity parameter $\mu=1$ and $g=0$ throughout.

To simplify the notation, we denote by ${V}(\Om)=(H_0^1(\Omega))^d$ and $W(\Om)=L^2_0(\Omega)$.
The variational formulation of the problem is given by: find $u\in {V}(\Om)$ and $p\in W(\Om)$ such that
\begin{alignat*}{3}
a(u, v) +& b(v, p) &=& l_f(v) &\qquad  &\mbox{for all } v \in V(\Omega), \\
&b(u,q) &=& 0&  \qquad&\mbox{for all } q \in W(\Omega),
\end{alignat*}
where the bilinear forms $a(\cdot,\cdot)$ and $b(\cdot,\cdot)$ are respectively defined by
\begin{alignat*}{3}
 a(u,v)&=&\avrg{\nabla u,\nabla v}{\Om}&+\avrg{\kappa^{-1}u,v}{\Om},& \mbox{for all } u,v\in V(\Om), \\
b(u,p)&=&\avrg{\DIV u,p}{\Om},&  &\mbox{ for all } u\in V(\Om),\ p\in W(\Om),
\end{alignat*}
and the linear form $l_f$ is given by
\[l_f(v) =\avrg{f,v}{\Om}, \mbox{ for all } v\in V(\Om),\]
where $\avrg{\cdot\,,\,\cdot}{\Om}$ denotes the $L^2$ inner product over $\Om$.

Let $\mathcal{T}_H$ be a coarse-grid partition of the domain $\Omega$ and $\mathcal{T}_h$ be a
conforming fine triangulation of $\Omega$. We assume that $\mathcal{T}_h$ is a
refinement of $\mathcal{T}_H$, where $h$ and $H$ represent the mesh size of a fine and coarse cell
respectively. Typically we assume that $0 < h \ll H < 1$, and that the triangulation $\mathcal{T}_h$
is fine enough to fully resolve the spatial variations of the coefficient $\kappa$ while $H$ is too
coarse to accurately resolve this spatial variations inside a coarse element, and the coefficient
$\kappa$ may have large variations within the coarse block.
On the triangulation $\mathcal{T}_h$,
we introduce the following finite element spaces 
\begin{align*}
{V}_h  &:= \{ {v} \in V(\Omega)| {v}|_K \in (P^2(K))^d
\mbox{ for all } K \in \mathcal{T}_h \}, \\
W_h &:= \{ q \in W(\Omega)| w|_K \in P^0(K), \mbox{ for all } K \in \mathcal{T}_h\}.
\end{align*}

The standard mixed finite element method for problem \eqref{eq:Brinkman.fs} is to
seek an approximation $({u}_h, p_h)$ in the finite element space
${V}_h \times W_h \subset {V}(\Omega) \times W(\Omega)$ such that
\begin{alignat*}{3}
a(u_h, v) +& b(v, p_h) &=& l_f(v) &\qquad &\mbox{for all $v \in V_h, $} \\
&b(q, u_h) &=& 0 &&\mbox{for all $q \in W_h$},
\end{alignat*}
or which is equivalent to the solution of the following linear system
\begin{align*}
 \begin{pmatrix}
  A&B\\
B^{T}&0
 \end{pmatrix}
\begin{pmatrix}
  u\\p
 \end{pmatrix}
=\begin{pmatrix}
  F\\ 0
 \end{pmatrix}.
\end{align*}
Here the matrices denote
\begin{eqnarray}
   &v^T A u = a(u,v),& \mbox{ for all } u,v\in V_h,\\
   &q^TB u = b(u,q),& \mbox{ for all } u\in V_h \mbox{ and } q\in W_h.
\end{eqnarray}
Note that here and below, in order to simplify notation, we are using the same notation for finite element functions
and their corresponding vector representations.

It is well known the mixed  finite element formulation described above is stable; see for instance \cite{XXX08}. In the case of high-contrast media,
a very refined grid is needed in order to fully resolve small scale features, and
thus it is prohibitively expensive to solve the resulting system. Meanwhile, if we naively apply
$P^2/P^0$ finite element spaces over the coarse mesh $\mathcal{T}_H$, the resulting
system is small but obviously the solution can only represent a poor approximation to the
exact solution. To turn around the dilemma, we follow the GMsFEM framework proposed
in \cite{egh12}.

In the GMsFEM methodology one divides the computations into onffline and online computations.
The offline computations are based upon a preliminary dimension reduction of the
fine-grid finite element spaces (that may include
dealing with additionally important physical parameters, uncertainties and nonlinearities), and then the online procedure (if needed)
is applied to construct a reduced order model on the offline space. We start by constructing  offline spaces.

We construct the coarse function space
$${V}^{\text{off}} := \mbox{span}\{{\phi}_i\}_{i=1}^{N_c},$$
where $N_c$ is the number of coarse basis functions. Each ${\phi}_i$ is supported in some coarse
neighborhood $w_l$. For the pressure field $p$, we use the space of piecewise constant functions over the coarse triangulation $\mathcal{T}_H$, that is,
\begin{equation}\label{eq:def:Woff}
W^{\text{off}} := \{ q \in L^2_0 (\Omega)| q|_K \in P^0(K), \; \mbox{ for all } \; K \in \mathcal{T}_H\}.
\end{equation}
We denote $N_H=\dim W^{\text{off}}$.

The idea is then to work on the reduced spaces $V^\text{off}\times W^\text{off}$ instead of the
original spaces $V(\Om)\times W(\Om)$.
In the general GMsFEM methodology, these offline spaces are used  in the online
computations where a further reduction may be  performed; see  \cite{egw10,egh12} for details.
The overall performance of the resulting GMsFEM depends on the
approximation properties of the resulting offline and online coarse spaces.
In this paper we focus on the construction of the offline spaces only. We mention that this is sufficient
for the effective numerical upscaling of the Brinkman model proposed above where neither
parameters or nonlinearities are considered. The more general case with additional parameters can also
be studied using the proposed method, but it requires online dimension reduction (\cite{egw10,egh12,egt11})
and thus defer to a future study.

The GMsFEM seeks an approximation $({u}_0, p_0)\in{V}^{\text{off}} \times W^{\text{off}}$ which satisfies the coarse scale offline formulation,
\begin{subequations}\label{eq:coarse_system}
\begin{alignat}{3}
a(u_0, v) + &b^{t}(p_0, v) &=& l_f(v) \, &\mbox{ for all $v \in V^{\text{off}}$}, \\
&b(u_0, q) &=& 0 &\mbox{for all $q \in W^{\text{off}}$}.
\end{alignat}
\end{subequations}
We can interpret the method in the following way using  matrix representations. Recall that both coarse basis functions
$\{{\phi}_i\}^{N_c}_{i=1}$ and $\{q_i\}^{N_H}_{i=1}$ are defined on the fine grid, and
can be represented by the fine grid basis functions. Specifically, we introduce the following matrices:
\[
R^T_0 = [{\phi}_1, \dots, {\phi}_{N_c}], \quad Q^T_0 = [q_1, \dots, q_{N_H}],
\]
where we identify the basis $\phi_i$ and $ q_i$ with their coefficient vectors in the fine grid basis.
Then the matrix analogue of the system \eqref{eq:coarse_system} can be equivalently written as
\begin{align}\label{eqn:offlineSystem}
 \begin{pmatrix}
  R_0AR_0^{T}&R_0B Q_0^T\\
Q_0 B^{T}R_0^{T}&0
 \end{pmatrix}
\begin{pmatrix}
  u_0\\p_0
 \end{pmatrix}
=\begin{pmatrix}
  R_0F\\0
 \end{pmatrix}.
\end{align}
Further, once we solve the coarse system \eqref{eqn:offlineSystem}, we can recover the fine scale solution by $R_0^T u_0$.
In other words, $R^{T}_{0}$ can be regarded as the transformation (also known as interpolation, extension, and
downscaling) matrix  from the space ${V}^{\text{off}}$ to the space ${V}^{h}$.

The accuracy of the GMsFEM relies crucially on the coarse basis functions $\{\phi_i\}$.
We shall present one novel construction of suitable basis functions for the Brinkman equation in Section \ref{sec:off}.
%
\section{The construction of the space ${V}^{\text{off}}$}
\label{sec:off}
In this section, we present the construction of the space ${V}^{\text{off}}$ in detail. For the
pressure field $p$, we simply use piecewise constant functions over the coarse grid as
defined in (\ref{eq:def:Woff}). Therefore the focus below is on the construction of the offline
velocity space $V^\text{off}$. To this end, we first introduce the concept of (harmonic)
extension of boundary data in the Brinkman sense,
which will play an important role in the construction.
The precise definition is given below.
\begin{Definition}[Brinkman Extension]
\label{Definition:Brinkman_ext}
For a domain $D \subset \mathbb{R}^d$, 
we define the Brinkman extension of any ${v}\in (H^{\frac12}(\partial D))^d$, denoted by
$\mathcal{H}({v}) \in ({H}^1(D))^d$, to be the unique solution of the following homogeneous
Brinkman equation (with $|D|$ being the measure of $D$)
\begin{alignat*}{3}
 \nabla p-\mu\Delta \mathcal{H}({v})+&\kappa^{-1} \mathcal{H}({v})&=& 0& \mbox{ in } D,\\
  &\mbox{div} \, \mathcal{H}({v})&=& \frac{1}{|D|} \int_{\partial D} {v} \cdot {n} & \mbox{ in } D,\\
 & \quad\;\mathcal{H}({v})&=& {v}& \mbox{ on } \partial D.
\end{alignat*}
\end{Definition}

\begin{remark}
In practice, the extension $\mathcal{H}({v})$ is the numerical solution of the equation in the
fine-scale finite element space ${V}_h(D) \times W_h(D)$, where $D$ is a coarse block (see Fig.
\eqref{schematic} for an illustration of coarse block and coarse neighborhood). This computation
can be efficiently performed due to the moderated size of the coarse regions. Besides, the computations
can be carried out in parallel, if the computations are required over all coarse regions.
\end{remark}
Now we are ready to state the detailed construction of the offline velocity space $V^\text{off}$.
Our construction consists of the following three steps. We defer the
analysis of the resulting GMsFEM method to Section 5.
\paragraph{Step 1: Building multiscale partition of unity functions.}
First we introduce a set of generalized global partition of unity functions on the coarse grid.
We denote the set of all coarse edges by $\mathcal{E}_H$, and consider the following
finite element space:
\[
  M_H := \{ {v} \in C^0(\mathcal{E}_H):\ \ {v}|_F \in P^2(F), \quad \mbox{ for all } F \in \mathcal{E}_H\}.
\]
Let $\mathcal{P}_H$ be the set of the shape functions of the space $M_H$. Then $\mathcal{P}_H$
also forms a partition of unity functions over the skeleton $\mathcal{E}_H$.

Next we introduce the set of multicale partition of unity functions for a two-dimensional domain $\Omega$.
We remark that the construction for the three-dimensional case is similar. For
any $\chi \in \mathcal{P}_H$, let $\omega$ denote the support of $\chi$, and we call $\omega$ a
coarse neighborhood associated with $\chi$. In Figure \ref{schematic}, we sketch all three possible
types of the coarse neighborhood, $\omega_1$, $\omega_2$ and $\omega_3$, respectively. $\omega_1$ corresponds
to partition of unity funtion $\chi$ having nodal value 1 at the coarse node $i$; $\omega_2$ represents the support of $\chi$
 valuing 1 at node $j$, and $\omega_3$ stands for support of $\chi$ equaling to 1 at node $k$.

\begin{figure}[htb!]
  \centering
  \includegraphics[width=0.65 \textwidth]{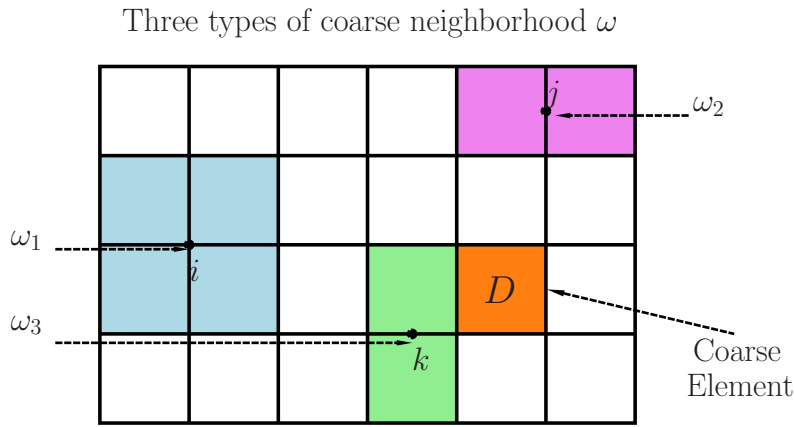}
  \caption{Illustration of three types of coarse neighborhoods and coarse element:
  $\omega_1$, $\omega_2$ and $\omega_3$ denote the support of partition of unity functions $\chi$.}
  \label{schematic}
\end{figure}
For each $\chi \in \mathcal{P}_H$, we have two Brinkman extensions of $\chi$: $\mathcal{H}({\chi}^x)
\text{ and } \mathcal{H}({\chi}^y)$. Here $\mathcal{H}({\chi}^x)$ is the Brinkman extension of the
Dirichlet data ${\chi}^x = (\chi, 0)$, and it is defined on each coarse block $D \subset
\omega$. The extension $\mathcal{H}({\chi}^y)$ is defined similarly, with ${\chi}^y = (0,\chi)$.
We note that these vector functions can be extended by $0$ to the entire domain $\Omega$,
since $\chi$ vanishes over $\partial \omega$. Finally, the generated partition of unity
functions associated with $\chi$ is $\chi_x = \frac{1}{2} (\mathcal{H}({\chi}^x))_x$,
$\chi_y = \frac{1}{2} (\mathcal{H}({\chi}^y))_y$, where $(\cdot)_x $ and $(\cdot)_y$ denote
the first component and second component of a vector, respectively. Thus, for each
$\chi \in \mathcal{P}_H$, we generate two partition of unity functions supported on $\omega$.
All these extensions together form a set of multiscale partition of unity functions,
which are denoted by:
\[
\mathcal{P}_{\text{ext}} = \{\chi_i\}^{N_p}_{i = 1},
\]
where $N_p$ is the number of multiscale partition of unity functions. We note that the set $\mathcal{P}_\text{ext}$
does not have the default property of partition of unity over the domain $\Omega$, but only over the skeleton $\mathcal{E}_H$, i.e.,
\begin{equation}\label{eq: partition}
\sum_{i=1}^{N_p} \chi_i \equiv 1 \quad \text{on $\mathcal{E}_H$.}
\end{equation}

\paragraph{Step 2: Constructing local snapshot space ${V}^{\omega}_{\text{snap}}$.}
In this step, we construct the local snapshot spaces. Proceeding as before, for each $\chi \in \mathcal{P}_{ext}$,
we let $\omega$ denote its support set, and call it the coarse neighborhood associated with $\chi$,
which consists of either two or four coarse blocks, cf. Figure \ref{schematic}. The construction of
the local snapshots is further divided into two substeps: generating the snapshot space over all coarse
neighborhoods $\omega$ and then improving their linear independence.

\noindent{\bf Step 2.1. Computing snapshots:} For each coarse neighborhood $\omega$, let
$\textsl{J}_{h}(\partial \omega)$ denote the set of fine nodes on $\partial \omega$. Let
$\delta_k \in C^0(\partial \omega)$ denote the shape function associated with the node
$x_k \in \textsl{J}_h(\partial \omega)$. i.e., $\delta_k \in C^0(\partial \omega)$ is the
piecewise linear function that takes value $1$ on the node $x_k$ and vanishes on all other
nodes. For each $\delta_k$, it generates two Brinkman extensions:
\begin{equation*}
{\psi}_{k,x} = \mathcal{H}((\delta_k, 0))\quad \mbox{and}\quad {\psi}_{k,y} = \mathcal{H}((0, \delta_k)).
\end{equation*}
Now the raw snapshot space on $\omega$ is given by
\[
\widetilde{{V}}^{\omega}_{\text{snap}} = \text{span}\{ {\psi}_{i,x}, {\psi}_{i,y}: \; \text{for all} \; x_i \in \textsl{J}_{h}(\partial \omega)\} + \text{span}\{(1, 0), (0, 1)\}.
\]
Here we artificially add two constant vectors in the basis; see Remark \ref{rmk:constant_vector}
below for the discussions.

\noindent{\bf Step 2.2: Improving linear independence of snapshots:}
After obtaining a family of local functions for each
coarse neighborhood, we need to discard the possible linearly dependent local snapshots.
To this end, we use a spectral problem based on the Euclidian inner product. Specifically,
Let $U$ be a matrix with columns being the local snapshots vector representation. We extract
the important modes of $U$ through the spectral decomposition of $U^TU$. In this manner, we keep the
linearly independent snapshots for each coarse neighborhood $\omega$ and denote the resulting space by
\[
{V}_{\text{snap}}^{\omega} = \text{span}\{ {\psi}_{l}^{\omega, \text{snap}}: 1\leq l \leq L^{\omega} \},
\]
with $L^{\omega}$ being the number of local basis functions for the coarse neighborhood $\omega$.

\paragraph{Step 3: Building the offline space ${V}^{\text{off}}$.}
In this final step, we build the global offline space ${V}^{\text{off}}$ from
the snapshot spaces  ${V}^{\omega}_{\text{snap}}$, and it involves two substeps:
constructing local offline space and constructing global offline space.

\noindent{\bf Step 3.1: Local multiscale space $\widetilde{{V}}^{\omega}_{\text{off}}$.}
The idea at this step is to extract only important information from the computed local
snapshots $V_\text{snap}^\omega$ corresponding to each coarse neighborhood $\omega$.
This can be achieved by performing a dimension reduction procedure in the space ${V}_{\text{snap}}^{\omega}$.
Namely, we consider the following spectral eigenvalue problem:
\begin{align}\label{eqn:eig}
A\widehat{{\Psi}}_k^{\omega,\text{off}} =\lambda_k S \widehat{{\Psi}}_k^{\omega,\text{off}},
\end{align}
where the matrices $A$ and $S$ are defined by
\begin{equation*}
  \begin{aligned}
    \displaystyle A &= [a_{mn}] = \int_{\omega} \kappa(x) \nabla {\psi}_m^{\omega,\text{snap}} \cdot \nabla {\psi}_n^{\omega,\text{snap}},\ 1 \le m, n \le L^{\omega}\\
  \displaystyle S &= [s_{mn}] = \int_{\omega}\kappa(x) {\psi}_m^{\omega,\text{snap}} {\psi}_n^{\omega,\text{snap}}, \ 1 \le m, n \le L^{\omega}.
   \end{aligned}
 \end{equation*}
Then we reorder the eigenvalues $\lambda_k$ are in an ascending order, and denote $\widehat{{\Psi}}_k^{\omega,\text{off}}$
as the coresponding eigenvectors.

To generate the offline space, we then choose the $M_{\text{off}}$ smallest eigenvalues of
\eqref{eqn:eig} and the corresponding eigenvectors in the respective space of snapshots by
setting $\widetilde{{\Psi}}_k^{\omega,\text{off}} = \sum_j \widehat{\Psi}_{kj}^{\text{off}}
\psi_j^{\omega,\text{snap}}$, where $\widehat{\Psi}_{kj}^{\text{off}}$ are the coordinates of
the vector $\widehat{{\Psi}}_k^{\omega,\text{off}}$. We then construct the offline space
$ \widetilde{{V}}_{\text{off}}^{\omega}$ corresponding  to the coarse neighborhood $\omega$ as
 $$
 \widetilde{{V}}_{\text{off}}^{\omega} = \text{span}\left( \widetilde{{\Psi}}_{1}^{\omega,\text{,off}}, \ldots,  \widetilde{{\Psi}}_{M_{\text{off}}}^{\omega,\text{off}}\right).
$$

We note that this step is performed only on each coarse neighborhood $\omega$. The dimensionality of
the space $\widetilde{{V}}^{\omega}_{\text{off}}$ solely depends on the eigenvalue problem
\eqref{eqn:eig} within the neighborhood $\omega$. It is known that this
space is related to important features of the media (cf. \cite{egw10}) such as high-conductivity
channels and inclusions, and thus its dimensionality depends on the structure of the heterogeneities.

\begin{remark}\label{rmk:constant_vector}
In the construction of the local snapshot space,
we have added constant functions in addition to spectral basis functions. Hence, the constant function,
which is the eigenvectors corresponding to the zero eigenvalue of \eqref{eqn:eig},
will always be in the offline space. By the construction of the offline space, each offline space
contains the partition of unity functions, and the smallest offline space consists of those
partition of unity functions only. This will be crucial in the stability analysis of the methods in Section 5.
\end{remark}

\noindent{\bf Step 3.2: Construction of the global offline space ${V}^{\text{off}}$ by partition
of unity.} The local multiscale spaces $\widetilde{{V}}^{\omega}_{\text{off}}$ are defined only on
each neighborhood $\omega$. However, it is not conforming if we simply extend the functions by $0$ to the
whole domain. We obtain a global conforming offline space $V^\text{off}$ as follows.

First, we multiply each local offline space $\widetilde{{V}}^{\omega}_{\text{off}}$ by the
corresponding partition of unity function $\chi$:
\[
\chi \widetilde{{V}}^{\omega}_{\text{off}} = \text{span}\left( \chi \widetilde{{\Psi}}_{1}^{\omega,\text{off}}, \ldots,  \chi \widetilde{{\Psi}}_{M_{\text{off}}}^{\omega,\text{off}}\right).
\]
Then the space $\chi \widetilde{{V}}^{\omega}_{\text{off}} \subset {H}^1_0(\omega)$, and we can extend
the functions in $\chi\widetilde{V}^\omega_\text{off}$ to the whole domain $\Omega$ by zero,
which is still denoted as $\chi \widetilde{{V}}^{\omega}_{\text{off}}$. Finally, we need to make
a correction of the divergence of the resulting functions to satisfy the following condition:\[
\nabla \cdot {V}^{\text{off}} \subset W^{\text{off}}.
\]
To this end, for each basis function $\chi \widetilde{{\Psi}}_i^{\omega, \text{off}}$, within each
coarse block $D \subset \omega$, we keep its trace along $\partial D$ and modify its interior values
to be the Brinkman extension $\mathcal{H}(\chi \widetilde{{\Psi}}_i^{\omega, off}|_{\partial D})$.
We denote this modified space by $\mathcal{H}(\chi \widetilde{{V}}^{\omega}_{\text{off}})$. The global
offline space $V^\text{off}$ results from assembling  all these modified local spaces as:
\[
{V}^{\text{off}}:= \{{v} \in ({H}^1_0(\Omega))^d: {v}|_{\omega} \in \mathcal{H}(\chi \widetilde{{V}}^{\omega}_{\text{off}})\}.
\]
This completes the construction of the offline space ${V}^{\text{off}}$. Finally, we refer to Section
\ref{prelim} for the coupling of the offline basis functions.

\section{Numerical results}\label{sec:numerical}
Now we test our framework with several examples. In our experiments, we take the domain
$\Omega = [0, 1] \times [0, 1]$, the source term $f=0$, and the boundary condition
is the constant horizontal velocity:
\[
{g} = (1, 0) \quad \mbox{on} \quad \partial \Omega.
\]
We study the model with different (inverse) permeability fields $\kappa^{-1}$
depicted in Figure \ref{fig:perms}. Figure \ref{fig:perms}(a) shows a fast Darcy
flow going through the slower region; in Figure \ref{fig:perms}(b), we exposit a slower Darcy flow past Darcy flow regions;
 in Figure \ref{fig:perms}(c), a
free flow going across the Darcy flow region is represented; and in Figure \ref{fig:perms}(d),
a Darcy flow passing the strong free flow region is shown.

\begin{figure}[hbt!]
\centering
\begin{tabular}{cc}
\includegraphics[trim = 1cm 0cm .5cm 0cm, clip=true,width = 0.45\textwidth]{{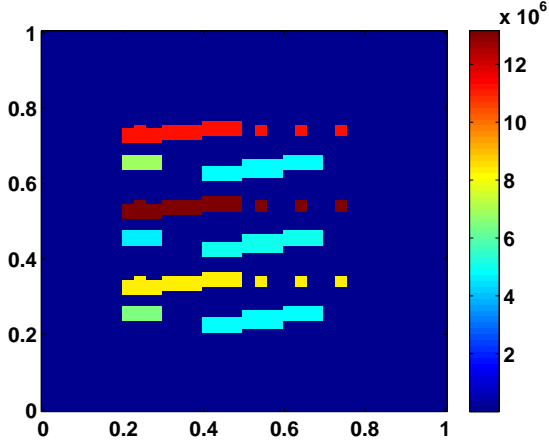}} & \includegraphics[trim = 1cm 0cm .5cm 0cm, clip=true,width = 0.45\textwidth]{{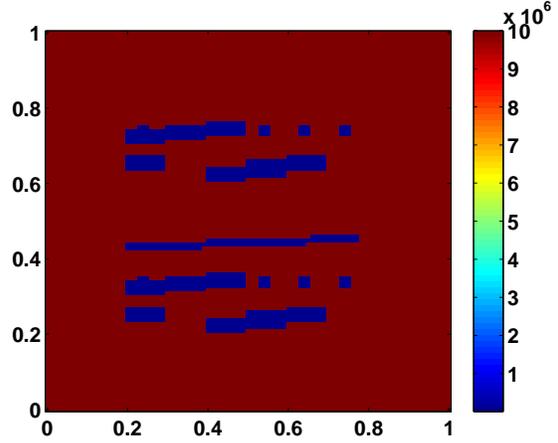}}\\
\small{(a) Fast Darcy flow going through a slower region.} & \small{(b) A slower Darcy flow past Darcy flow regions.}\\
\includegraphics[trim = 1cm 0cm .5cm 0cm, clip=true,width = 0.45\textwidth]{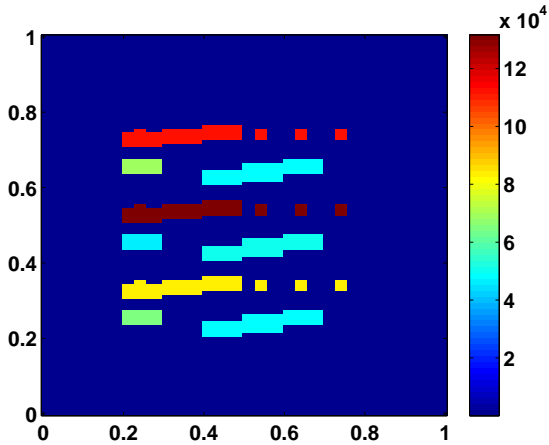}&\includegraphics[trim = 1cm 0cm .5cm 0cm, clip=true,width = 0.45\textwidth]{{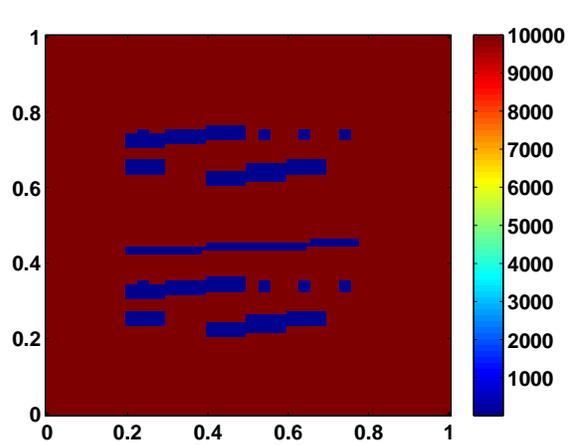}}\\
\small{(c) A free flow going across the Darcy flow region.}&\small{(d) A Darcy flow passing the strong free flow region.}
\end{tabular}
\caption{Four representative inverse permeability fields $\kappa^{-1}$. }\label{fig:perms}
\end{figure}

We divide the computational domain $\Omega=[0\;1]\times[0\;1]$ into $N_f=1/h^2$ equal squares (where each square is
further divided into two triangles), and use $P_2/P_0$ elements on the fine mesh with $h=1/100$. We
use a coarse-mesh size $H=1/10$ where we divide the domain $\Omega=[0,1]\times[0,1]$ into $1/H^2$ squares.

We depict the fine-scale solution, and three coarse-scale solutions with coarse spaces of dimensions
798, 1110 and 2726 in Figure \ref{fig:solns_HCC}. The dimension of the fine scale velocity space
$V_h$ is 80802. In these numerical tests, we use the value of the permeability field
$\kappa^{-1}$ from Figure \ref{fig:perms}(a). We observe that a larger coarse space yields a
better approximation of the fine-scale solution. Further, we have the following observations.
\begin{itemize}
\item[(a)] The use of one single basis function for each node gives large errors
and thus it is necessary to add spectral basis functions.

\item[(b)] The error decreases as more spectral basis functions are added in each
coarse-grid block.

\item[(c)] The error decreases if the solution displays fast flow in some regions
instead of Darcy flow over the whole region under the same contrast.
\end{itemize}

\begin{figure}[htb!]\centering
\begin{tabular}{cc}
\includegraphics[height = 5cm,width=7.5cm]{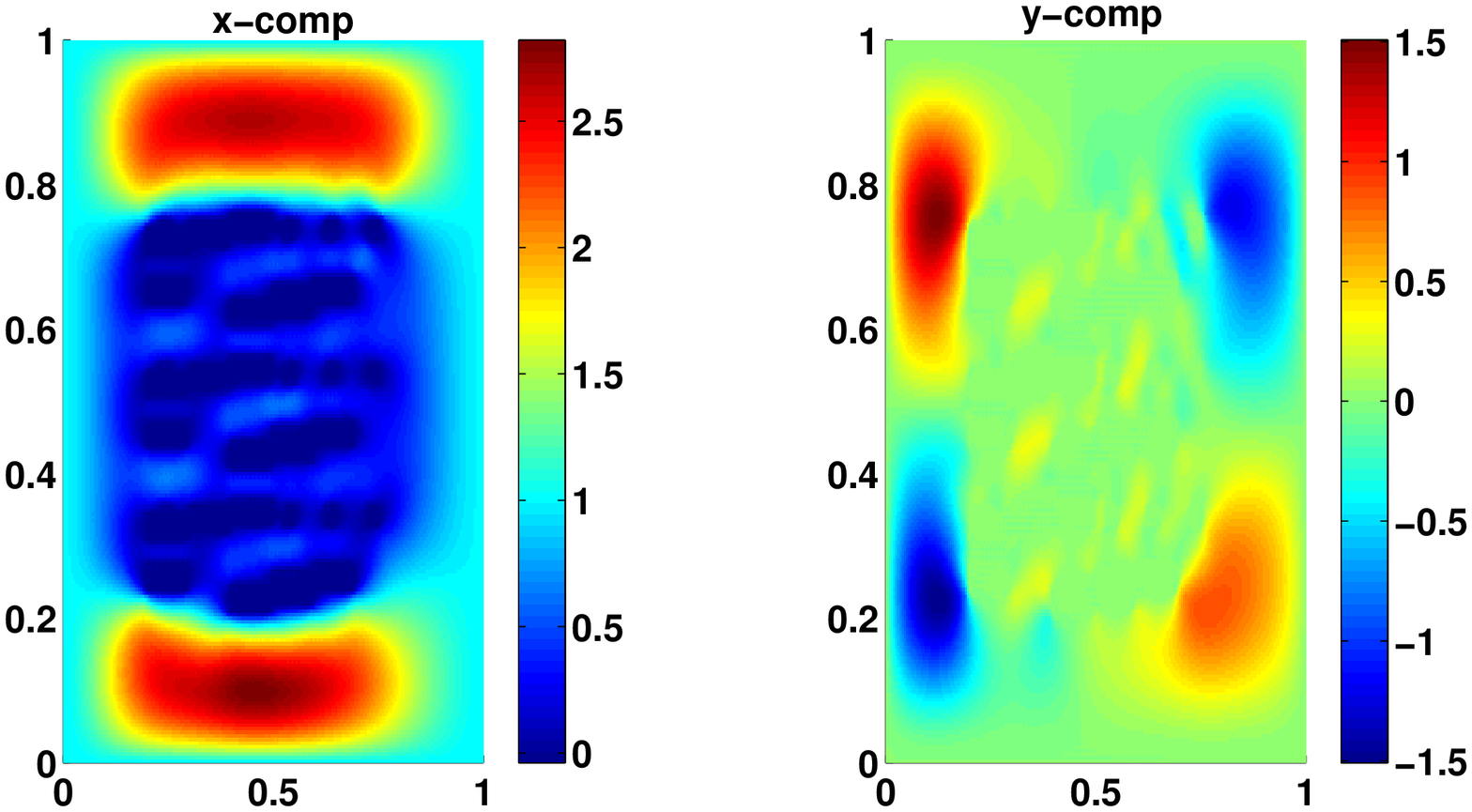}&\includegraphics[height = 5cm,width=7.5cm]{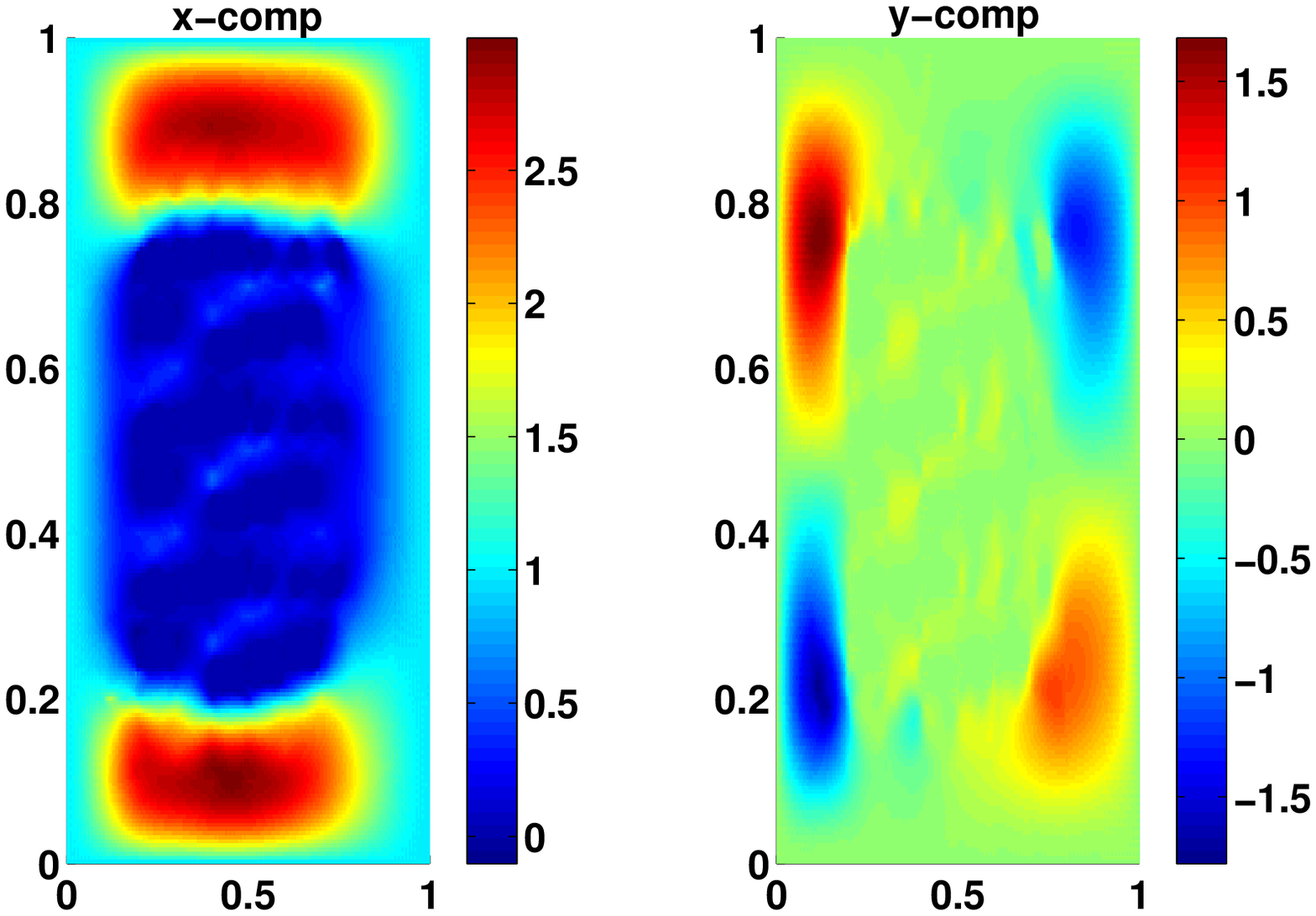}\\
 \small{(a) Fine-scale solution.}&\small{(b) Coarse solution with solution space of 798.}\\
\includegraphics[height = 5cm,width=7.5cm]{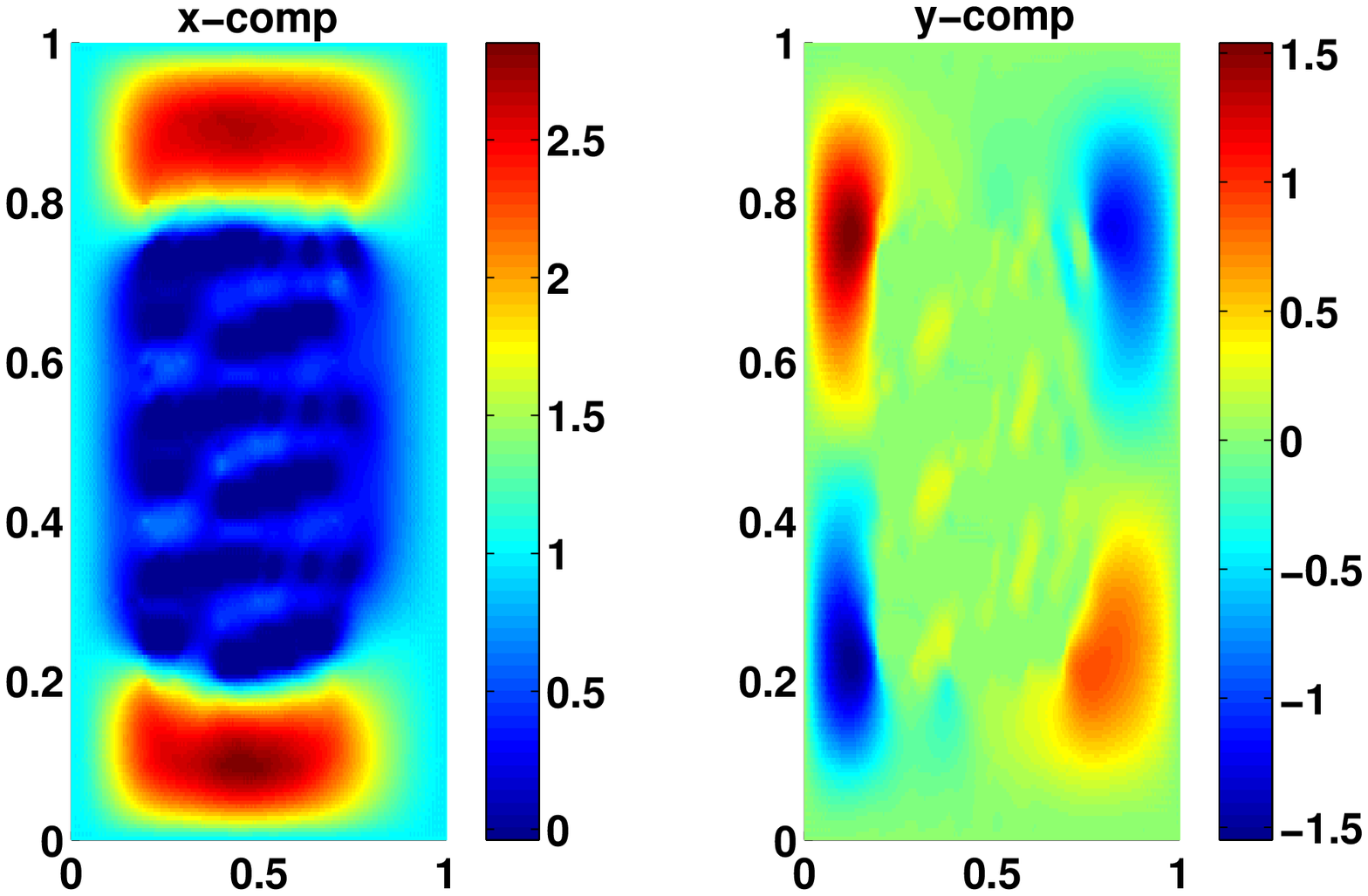}&\includegraphics[height = 5cm,width=7.5cm]{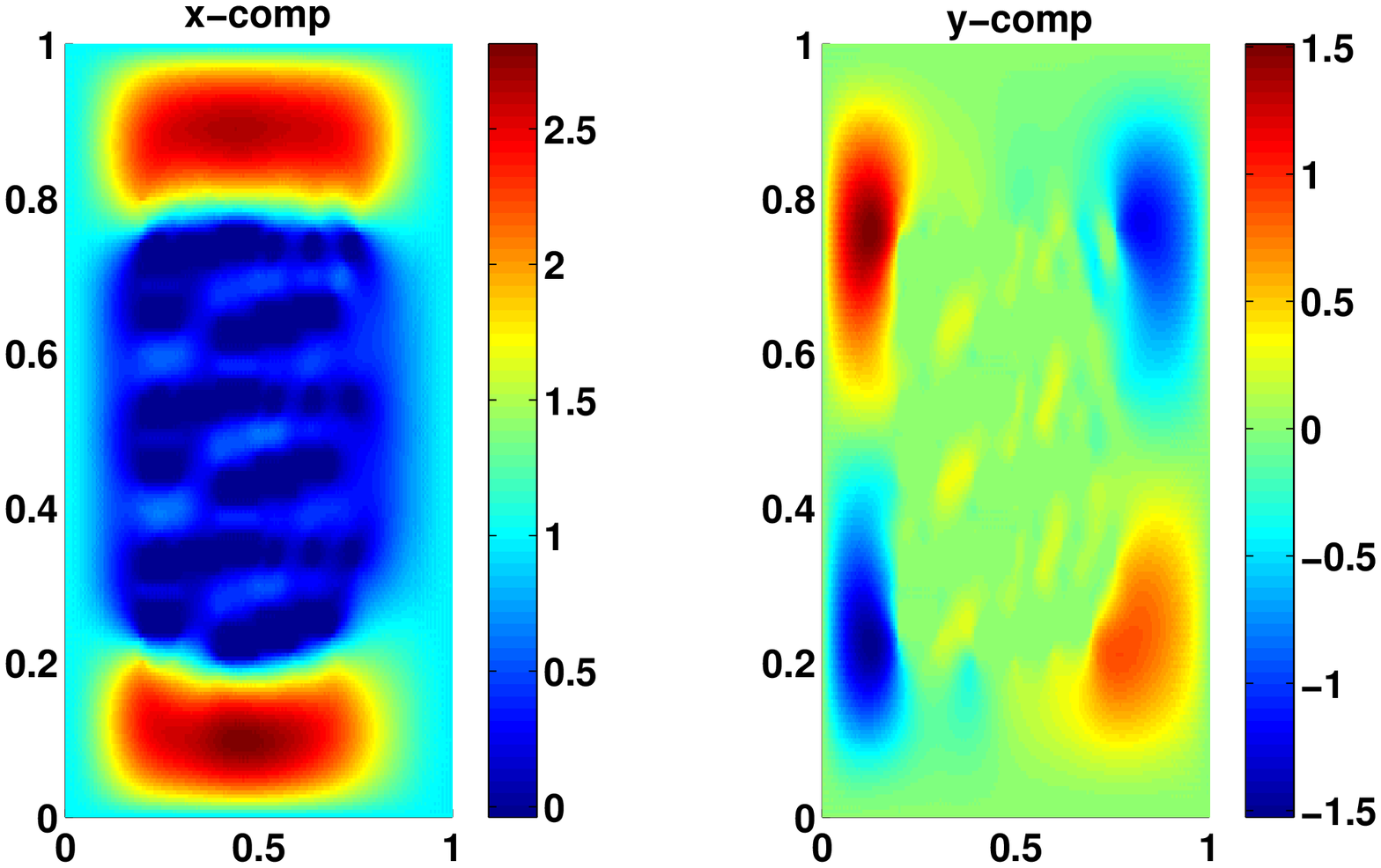}\\
 \small{(c) Coarse solution with solution space of 1110.}&\small{(d) Coarse solution with solution space of 2726.}
 \end{tabular}
   \caption{The fine-scale solution and three coarse-scale solutions with different dimensions of coarse spaces
   using the permeability field $\kappa^{-1}$ in Figure \ref{fig:perms} (a). }\label{fig:solns_HCC}
\end{figure}

In Tables \ref{table:HCCResult}-\ref{table:Reverse of HCC_stokes Result}, we present the results with the multiscale partition of unity
functions as required by the conforming Galerkin formulation corresponding to permeability fields
in Figure \ref{fig:perms}. In the tables, the first column shows the dimension of the offline space $V^\text{off}$,
and the $L^{2}$-weighted error between the offline solution $u^{\text{off}}$ and the fine-scale solution
$u$ and the $H^1$-weighted energy error are calculated respectively by
\begin{equation*}
  \|u-u^\text{off}\|_{L^2_\kappa(D)}=\frac{\normL{\kappa^{-1/2}(u-u^{\text{off}})}{\Om}}{\normL{\kappa^{-1/2}u}{\Om}}
  \quad \mbox{and}\quad
  \|u-u^\text{off}\|_{H^1_\kappa(D)}=\frac{\normL{\kappa^{-1/2}\nabla(u-u^{\text{off}})}{\Om}}{\normL{\kappa^{-1/2}\nabla u}{\Om}}.
\end{equation*}

\begin{table}[htb!]
\centering
\caption{Numerical results for problem \eqref{eq:Brinkman.fs} with $\kappa^{-1}$ in Figure \ref{fig:perms}(a). The $L^2$-weighted error and energy error are $66.34\%$ and $99.73\%$ for the MsFEM solution. In the simulation, the dimension of the snapshot space is fixed at 4498 with a weighted $L^2$ and energy relative error $1.26\%$ and $2.13\%$.}
 \label{table:HCCResult}
\begin{tabular}{|c|c|c|c|c|}
\hline
\multirow{2}{*}{$\text{dim}(V_{\text{off}})$}  &
\multicolumn{2}{c|}{  $\|u-u^{\text{off}} \|$ (\%) }  \\
\cline{2-3} {}&
$\hspace*{0.8cm}   L^{2}_\kappa(D)   \hspace*{0.8cm}$ &
$\hspace*{0.8cm}   H^{1}_\kappa(D)  \hspace*{0.8cm}$
\\
\hline\hline
       $888$     &       $35.46$    & $74.91$  \\
\hline
      $1372$    &    $26.62$    & $58.25$ \\
\hline
      $2028$    &    $11.79$    & $26.05$ \\
\hline
       $2204$   &   $8.61$  &$19.47$  \\

\hline
\end{tabular}
\end{table}

In Table \ref{table:HCCResult}, we display the velocity error results using a permeability field with
values of $\kappa^{-1}$ large in the background, with smalle inclusions values, cf. Figure
\ref{fig:perms}(a). For simplicity, we set a threshold value $\lambda^\text{off}$ for selecting
eigenvectors in the construction of the offline space. Specifically, for each coarse
neighborhood $\omega$, the offline space consists of those eigenvectors in Eqn \eqref{eqn:eig}
with eigenvalues $\lambda_k\geq \lambda^{\text{off}}$. Notice that the smaller is $\lambda^{\text{off}}$,
the larger is the velocity offline space. In the simulation, the choices $\lambda^{\text{off}}=1/3\mbox{,
}1/4\mbox{, }1/7\mbox{, and }1/10$ give the offline spaces of dimension 888, 1372, 2028 and 2204,
respectively. It is observed from Table \ref{table:HCCResult} that the error decreases from $74.91\%$ to $19.47\%$.

%
%
\begin{table}[htb!]
\centering
\caption{Numerical results for problem \eqref{eq:Brinkman.fs} with $\kappa^{-1}$ in Figure \ref{fig:perms}(b). The $L^2$-weighted error and energy error are $74.68\%$ and $130.42\%$ for the MsFEM solution. In the simulation, the dimension of the snapshot space is fixed at 4498 with a weighted $L^2$ and energy relative error $1.33\%$ and $13.03\%$. } \label{table:Reverse of HCCResult}
\begin{tabular}{|c|c|c|c|c|}
\hline
\multirow{2}{*}{$\text{dim}(V_{\text{off}})$}  &
\multicolumn{2}{c|}{  $\|u-u^{\text{off}} \|$ (\%) }  \\
\cline{2-3} {}&
$\hspace*{0.8cm}   L^{2}_\kappa(D)   \hspace*{0.8cm}$ &
$\hspace*{0.8cm}   H^{1}_\kappa(D)  \hspace*{0.8cm}$
\\
\hline\hline
       $682$     &       $7.86$    & $36.90$  \\
\hline
      $1512$    &    $1.85$    & $18.37$ \\
\hline
      $2230$    &    $1.51$    & $15.27$ \\
\hline
       $2744$   &   $1.38$  &$13.84$  \\

\hline
\end{tabular}
\end{table}%

The results in Table \ref{table:Reverse of HCCResult} are calculated with values of $\kappa^{-1}$
that are large
in inclusions, and small in the background, cf. Figure \ref{fig:perms}(b). Compared with results in
Table \ref{table:HCCResult}, the errors in Table \ref{table:Reverse of HCCResult} are slightly better
in the sense that the relative energy errors are smaller when using the same dimensional offline space.
In this numerical test, we take $\lambda^{\text{off}}=1/3\mbox{, }1/4\mbox{, }1/7\mbox{, and }1/10$
with the offline space of dimension 682, 1512, 2230 and 2744 respectively. From Table
\ref{table:Reverse of HCCResult}, the energy errors decrease from $36.90\%$ to $13.84\%$.

\begin{table}[htb!]
\centering
\caption{Numerical results for problem \eqref{eq:Brinkman.fs} with $\kappa^{-1}$ in Figure \ref{fig:perms}(c). The $L^2$-weighted error and energy error are $85.25\%$ and $73.85\%$ for the MsFEM solution. In the simulation, the dimension of the snapshot space is fixed at 4498 with a weighted $L^2$ and energy relative error $1.94\%$ and $3.54\%$.
 }
 \label{table:HCC-stokes-Result}
\begin{tabular}{|c|c|c|c|c|}
\hline
\multirow{2}{*}{$\text{dim}(V_{\text{off}})$}  &
\multicolumn{2}{c|}{  $\|u-u^{\text{off}} \|$ (\%) }  \\
\cline{2-3} {}&
$\hspace*{0.8cm}   L^{2}_\kappa(D)   \hspace*{0.8cm}$ &
$\hspace*{0.8cm}   H^{1}_\kappa(D)  \hspace*{0.8cm}$
\\
\hline\hline
       $834$     &       $35.58$    & $38.10$  \\
\hline
      $1512$    &    $14.34$    & $19.41$ \\
\hline
      $2084$    &    $6.81$    & $9.90$ \\
\hline
       $2306$   &   $4.54$  &$7.65$  \\

\hline
\end{tabular}
\end{table}%

In Tables \ref{table:HCC-stokes-Result} and \ref{table:Reverse of HCC_stokes Result}, we employ certain permeability
fields $\kappa^{-1}$ to get fast flow and Darcy flow simultaneously.  In Table \ref{table:HCC-stokes-Result}, we use
a permeability field $\kappa^{-1}$ small in inclusions, and large in the background, cf. Figure \ref{fig:perms}(c).
In this numerical test, we take $\lambda^{\text{off}}=1/3\mbox{, }1/4\mbox{, }1/7\mbox{, and }1/10$ with the offline
space of dimension 834, 1512, 2084 and 2316 respectively. From Table \ref{table:Reverse of HCCResult}, the energy
errors decrease from $38.10\%$ to $7.65\%$.
In Table \ref{table:Reverse of HCC_stokes Result}, we experimented with values of $\kappa^{-1}$ large in inclusions,
and small in the background as shown in Figure \ref{fig:perms}(d).  In this numerical test, we take
$\lambda^{\text{off}}=1/3\mbox{, }1/4\mbox{, }1/7\mbox{, and }1/10$ with the offline space of dimension 682,
1090, 1992 and 3344 respectively. From Table \ref{table:Reverse of HCCResult}, the energy errors decrease from $43.26\%$ to $5.38\%$.

\begin{table}[htb!]
\centering
\caption{Numerical results for problem \eqref{eq:Brinkman.fs} with $\kappa^{-1}$ in Figure \ref{fig:perms}(d). The $L^2$-weighted error and energy error are $74.68\%$ and $130.42\%$ for the MsFEM solution. In the simulation, the dimension of the snapshot space is fixed at 4498 with a weighted $L^2$ and energy relative error $1.47\%$ and $3.75\%$. }
 \label{table:Reverse of HCC_stokes Result}
\begin{tabular}{|c|c|c|c|c|}
\hline
\multirow{2}{*}{$\text{dim}(V_{\text{off}})$}  &
\multicolumn{2}{c|}{  $\|u-u^{\text{off}} \|$ (\%) }  \\
\cline{2-3} {}&
$\hspace*{0.8cm}   L^{2}_\kappa(D)   \hspace*{0.8cm}$ &
$\hspace*{0.8cm}   H^{1}_\kappa(D)  \hspace*{0.8cm}$
\\
\hline\hline
       $682$     &       $46.80$    & $43.26$  \\
\hline
      $1090$    &    $30.92$    & $30.30$ \\
\hline
      $1992$    &    $13.49$    & $13.15$ \\
\hline
       $3344$   &   $6.36$  &$5.38$  \\

\hline
\end{tabular}
\end{table}%

In Table \ref{table:HCC-stokes-Result}, the solution represents fast flow in the inclusions (with high
permeability value) and Darcy flow in the background, whereas in Table \ref{table:Reverse of HCC_stokes Result},
the solution is a fast flow in the background (with high permeability value) and Darcy flow in the inclusions.
The results in these four tables indicate that the errors are smaller when fast flow exists.

\section{Convergence analysis}
\label{Analysis}
%
In this section, we present a priori error estimates for the method. we first derive the stability argument.
Then we show the approximation property of the method. For the sake of simplicity, we assume a homogeneous
boundary condition ${g} = 0$ in the Brinkman equation \eqref{eq:Brinkman.fs}.
\subsection{Stability argument}\label{subsec:Stability argument}
To prove the stability of the method, we apply the well known {\em{inf-sup}} argument.
First, we define a norm on ${V}(\Om)$ by
\begin{equation}\label{eqn:Tnorm}
 \Tnorm{{u}}{\Om}^2=a({u},{u})+M\avrg{\DIV {u},\DIV {u}}{\Om},
\end{equation}
and the  norm on $W(\Om)$ is defined by
\begin{equation}\label{eqn:Snorm}
 \Snorm{p}{\Om}=M^{-\frac{1}{2}}\normL{p}{\Om},
\end{equation}
where $M=\max(\normI{\kappa^{-1}}{\Om},\,1)$. We also define the following two null spaces:
\begin{align*}
{Z} &:= \{{v} \in {V}(\Omega) : \; b({v}, p)= 0, \quad \mbox{ for all } \; p \in W(\Omega)\}, \\
{Z}^{\text{off}} &:= \{{v} \in {V}^{\text{off}} : \; b({v}, p)= 0, \quad
\mbox{ for all } \; p \in W^{\text{off}}\}.
\end{align*}

Under these definitions and the construction of $V^{\text{off}}$ and $W^{\text{off}}$, it holds:
\begin{equation}\label{eqn:coarsive_a}
{Z}^{\text{off}}  \subset {Z}, \quad a({v}, {v})  \succeq \Tnorm{{v}}{\Om} \quad \mbox{ for all } {v} \in {Z}.
\end{equation}
Here, and in what follows, we use the notation $A\succeq B$ to represent $A\geq \mathcal{C} B$ with
a constant $\mathcal{C}$ independent of the contrast and the functions involve, and a similar
interpretation applies to the notation $\preceq$.
The above two results imply that the bilinear form $a(\cdot, \cdot)$ is also coercive on ${Z}^{\text{off}}$.

We first verify that the continuous problem \eqref{eq:Brinkman.fs} satisfies the {\em inf-sup} condition.
\begin{lemma}\label{lemma:Cea'sLemma}
Let $\Tnorm{\cdot}{\Om}$ and $\Snorm{\cdot}{\Om}$ be defined in \eqref{eqn:Tnorm} and \eqref{eqn:Snorm}. Then
the following inf-sup condition holds independent of the contrast
 \begin{align}
\label{eqn:Cea'sLemma}
 \sup\limits_{{v}\in {V}(\Om)\backslash\{0\}}\,\frac{\avrg{\DIVinThm {v},q}{\Om}}{\Tnorm{{v}}{\Om}}\succeq \Snorm{q}{\Om},\,\mbox{ for all } q\in W(\Om).
\end{align}
\begin{proof}
It is well known \cite{BF} that the operator $b(\cdot, \cdot)$ satisfies the {\em inf-sup} condition under the standard norms, i.e.,
\begin{align}
\label{eqn:cea'sH1}
 \sup\limits_{{v} \in {V}(\Om)\backslash \{0\}}\,\frac{\avrg{\DIV {v}, q}{\Om}}{\normH{{v}}{\Om}}\succeq \normL{q}{\Om},\,\mbox{ for all } q\in W(\Om).
\end{align}
By the definition  of $\Tnorm{\cdot}{\Om}$ and $\Snorm{\cdot}{\Om}$, we have $\Tnorm{{v}}{\Om}\leq M^{\frac{1}{2}}\normH{{v}}{\Om}$, and
$\normL{q}{\Om}=M^{\frac{1}{2}}\Snorm{q}{\Om}$ for all $({v}, q) \in {V}(\Omega) \times W(\Omega)$. Combining these facts
with \eqref{eqn:cea'sH1} completes the proof.
\end{proof}

\end{lemma}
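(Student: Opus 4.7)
The plan is to reduce the weighted inf-sup condition to the classical (unweighted) inf-sup condition \eqref{eqn:cea'sH1} that holds in the standard $H^1$--$L^2$ norms. The key observation is that the triple-bar norms on $V(\Om)$ and $W(\Om)$ are both comparable to the standard Sobolev norms up to a power of the contrast parameter $M=\max(\normI{\kappa^{-1}}{\Om},1)$, but the two powers of $M$ appear in a way that cancels, yielding an estimate independent of the contrast.

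First I would bound $\Tnorm{v}{\Om}$ from above in terms of $\normH{v}{\Om}$. For any $v\in V(\Om)$,
\[
a(v,v)=\normL{\nabla v}{\Om}^{2}+\avrg{\kappa^{-1}v,v}{\Om}\le \normL{\nabla v}{\Om}^{2}+M\normL{v}{\Om}^{2}\le M\normH{v}{\Om}^{2},
\]
using $M\ge 1$. Similarly $\normL{\DIV v}{\Om}^{2}\le d\,\normL{\nabla v}{\Om}^{2}\le d\,\normH{v}{\Om}^{2}$, so altogether $\Tnorm{v}{\Om}\preceq M^{1/2}\normH{v}{\Om}$. Next I would translate the pressure norm: by definition $\normL{q}{\Om}=M^{1/2}\Snorm{q}{\Om}$.

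With these two comparisons in hand, for any fixed $q\in W(\Om)$ and any admissible $v\in V(\Om)\setminus\{0\}$,
\[
\frac{\avrg{\DIV v,q}{\Om}}{\Tnorm{v}{\Om}}\succeq \frac{1}{M^{1/2}}\cdot\frac{\avrg{\DIV v,q}{\Om}}{\normH{v}{\Om}}.
\]
Taking the supremum over $v$ and invoking the classical inf-sup condition \eqref{eqn:cea'sH1} yields
\[
\sup_{v\in V(\Om)\setminus\{0\}}\frac{\avrg{\DIV v,q}{\Om}}{\Tnorm{v}{\Om}}\succeq \frac{1}{M^{1/2}}\normL{q}{\Om}=\Snorm{q}{\Om},
\]
which is the desired estimate. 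The constant hidden in $\succeq$ inherits only the constant from \eqref{eqn:cea'sH1} and the geometric factor $d$, and so is independent of $\kappa$.

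I do not expect a serious obstacle here: the proof is essentially just a rescaling argument, and the choice of the factor $M$ in the two norms is precisely what makes the powers of contrast cancel. The only mild subtlety is making sure $M\ge 1$ is used to absorb the $\normL{v}{\Om}^{2}$ term into the $H^1$ norm with the correct sign, and that the divergence-penalty term $M\avrg{\DIV v,\DIV v}{\Om}$ in $\Tnorm{\cdot}{\Om}$ is also controlled by $M\,\normH{v}{\Om}^{2}$ as above.
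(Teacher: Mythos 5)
Your proof is correct and follows essentially the same route as the paper: both reduce to the classical inf-sup condition in the standard norms via the comparisons $\Tnorm{{v}}{\Om}\preceq M^{1/2}\normH{{v}}{\Om}$ and $\normL{q}{\Om}=M^{1/2}\Snorm{q}{\Om}$, with the powers of $M$ cancelling. Your version merely spells out the elementary bounds (including the dimensional factor in $\normL{\DIV {v}}{\Om}\le \sqrt{d}\,\normL{\nabla {v}}{\Om}$) that the paper leaves implicit.
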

%
Next, we show that the discrete problem \eqref{eq:coarse_system} also satisfies this type of inf-sup condition
with a constant independent of the contrast for every offline space ${V}^{\text{off}} \times W^{\text{off}}$.
First, we consider the following auxiliary space:
\begin{align*}
{V}_H(\Omega) &:= \{{v} \in {H}^1(\Omega)| \; {v}|_{K} \in {Q}^2(K), \; \forall K \in \mathcal{T}_H\}, \\
W_H(\Omega) &:= \{q \in L^2_0(\Omega)| \; q|_{K} \in {Q}^0(K), \; \forall K \in \mathcal{T}_H\}.
\end{align*}
For the Brinkman equation, we have the following inf-sup condition in ${V}_H(\Om)\times W_H(\Om)$ (see \cite{Brenner&Scott}),
\begin{align}
\label{eqn:Cea'sLemma_Discrete}
 \sup\limits_{{v} \in {V}_H(\Om)\backslash\{0\}}\,\frac{\avrg{\DIVinThm {v} ,q}{\Om}}{\normH{{v}}{\Om}}\succeq \normL{q}{\Om},\,\mbox{ for all } q\in W_H(\Om).
\end{align}
Following the proof of Lemma \ref{lemma:Cea'sLemma}, we can obtain the discrete inf-sup condition in ${V}_H(\Om)\times W_H(\Om)$ with $\Tnorm{\cdot}{\Om}$ and $\Snorm{\cdot}{\Om}$, i.e.,
\begin{align}
\label{eqn:Cea'sLemma_Discrete_Triple}
 \sup\limits_{{v}\in {V}_H(\Om)\backslash\{0\}}\,\frac{\avrg{\DIVinThm {v} ,q}{\Om}}{\Tnorm{{v}}{\Om}}\succeq \Snorm{q}{\Om},\,\mbox{ for all } q\in W_H(\Om).
\end{align}

To prove the {\em inf-sup} condition for the space ${V}^{\text{off}}\times W^{\text{off}}$, we need the following result,
which states the stability of the Brinkman extension with respect to the weighted norm defined in \eqref{eqn:Tnorm}.
\begin{lemma}\label{thm:PropertyOfProjection}
For any ${w} \in ({H}^1(D))^d$, the Brinkman extension $\mathcal{H}({w})$ of $w$ on $D$ satisfies
\begin{align}
 \Tnorm{\mathcal{H}({w})}{D}\preceq\Tnorm{{w}}{D}.
\end{align}
\end{lemma}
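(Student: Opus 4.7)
The plan is to recognize $\mathcal{H}(w)$ as the solution of an equality-constrained minimization and produce an admissible competitor using a Bogovskii-type right-inverse of the divergence.

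First, write $z:=\mathcal{H}(w)$ and $c:=\frac{1}{|D|}\int_{\partial D}w\cdot n$. The saddle-point formulation associated with Definition \ref{Definition:Brinkman_ext} gives $a(z,v)+b(v,p)=0$ for every $v\in(H^1_0(D))^d$ and $\operatorname{div}z=c$ on $D$, so for every $\eta\in(H^1_0(D))^d$ with $\operatorname{div}\eta=0$ we have $a(z,\eta)=0$. A standard Galerkin-orthogonality argument then shows that $z$ minimises $a(v,v)$ over the admissible set $\mathcal{A}:=\{v\in(H^1(D))^d : v|_{\partial D}=w,\ \operatorname{div}v=c\}$. The plan is to compare $a(z,z)$ with $a(v_{*},v_{*})$ for a judicious competitor $v_{*}\in\mathcal{A}$ built from $w$.

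Set $v_{*}=w+\phi$ and note that $v_{*}\in\mathcal{A}$ iff $\phi\in(H^1_0(D))^d$ and $\operatorname{div}\phi=c-\operatorname{div}w$. The right-hand side has zero mean on $D$, since $\int_D(c-\operatorname{div}w)=|D|c-\int_{\partial D}w\cdot n=0$, so by the Bogovskii construction (equivalently, the inf-sup property of the divergence operator $(H^1_0(D))^d\to L^2_0(D)$) such a $\phi$ exists with $\|\nabla\phi\|_{L^2(D)}\preceq\|c-\operatorname{div}w\|_{L^2(D)}\preceq\|\operatorname{div}w\|_{L^2(D)}$, where the hidden constant depends only on the shape of $D$ and is independent of $\kappa$. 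The Friedrichs inequality for $\phi$ then gives $\|\phi\|_{L^2(D)}\preceq\|\nabla\phi\|_{L^2(D)}\preceq\|\operatorname{div}w\|_{L^2(D)}$, and consequently $\|\kappa^{-1/2}\phi\|_{L^2(D)}^{2}\le M\|\phi\|_{L^2(D)}^{2}\preceq M\|\operatorname{div}w\|_{L^2(D)}^{2}$.

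Combining the energy-minimisation property with the estimates on $\phi$,
\begin{equation*}
a(z,z)\le a(v_{*},v_{*})\preceq a(w,w)+\|\nabla\phi\|_{L^2(D)}^{2}+\|\kappa^{-1/2}\phi\|_{L^2(D)}^{2}\preceq a(w,w)+M\|\operatorname{div}w\|_{L^2(D)}^{2}.
\end{equation*}
For the divergence part of the norm, since $\operatorname{div}z\equiv c$,
\begin{equation*}
\|\operatorname{div}z\|_{L^2(D)}^{2}=|c|^{2}|D|=\frac{1}{|D|}\Bigl(\int_{D}\operatorname{div}w\Bigr)^{\!2}\le\|\operatorname{div}w\|_{L^2(D)}^{2}
\end{equation*}
by Cauchy--Schwarz. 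Multiplying this by $M$ and adding it to the bound above yields $\Tnorm{z}{D}^{2}=a(z,z)+M\|\operatorname{div}z\|_{L^2(D)}^{2}\preceq a(w,w)+M\|\operatorname{div}w\|_{L^2(D)}^{2}=\Tnorm{w}{D}^{2}$, which is the desired inequality.

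The only delicate step is the Bogovskii bound on a generic coarse subdomain $D$ with a constant that does not see the contrast in $\kappa$; this is standard (the constant depends only on the geometry/aspect ratio of $D$), but one must track the $M$ factors carefully so that the $L^{2}$ mass term for $\phi$ is absorbed into the $M\|\operatorname{div}w\|^{2}$ contribution that is already part of $\Tnorm{w}{D}^{2}$. All other ingredients (the saddle-point characterisation of $\mathcal{H}(w)$, the Friedrichs inequality for $\phi$, and Cauchy--Schwarz) are routine.
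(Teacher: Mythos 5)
Your argument is correct, and it takes a genuinely different route from the paper. The paper works directly with the residual $v=\mathcal{H}(w)-w$, which solves a nonhomogeneous Brinkman system with zero boundary data; it first bounds the extension pressure $p$ by testing against a Bogovskii field $\phi$ with $-\DIV\phi=p$, then tests the residual equation against $v$ itself and closes the estimate with Young's inequality (choosing $\delta=\tfrac14$), before adding the divergence term separately. You instead exploit the symmetry and positivity of $a(\cdot,\cdot)$ to characterize $\mathcal{H}(w)$ as the minimizer of the energy over the affine set $\{v: v|_{\partial D}=w,\ \DIV v=c\}$, and then beat it with the explicit competitor $w+\phi$, where the Bogovskii field now corrects the divergence rather than representing the pressure. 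Your route buys a cleaner proof: the pressure never needs to be estimated, there is no Young's-inequality bookkeeping, and the origin of the $M\normL{\DIV w}{D}^{2}$ contribution (hence of the $M$-weighting in $\Tnorm{\cdot}{D}$) is transparent. The paper's route buys the intermediate pressure bound \eqref{eqn:pressureProjection} as a byproduct, and it does not rely on $a$ being symmetric, so it would survive a nonsymmetric perturbation of the momentum equation, whereas your minimization step would not. Two small points worth making explicit in your write-up: the orthogonality $a(z,v-z)=0$ requires noting that $v-z$ is both in $(H^{1}_0(D))^d$ and divergence-free for every competitor $v$ in the admissible set (which is exactly why the constrained set must fix $\DIV v=c$ and not merely the boundary trace); and since in practice the extension is computed in $V_h(D)\times W_h(D)$, the same argument at the discrete level needs the discrete inf-sup (Fortin) version of the Bogovskii bound --- the paper's proof has the identical caveat, so this is not a defect of your approach relative to theirs.
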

\begin{proof}
By the definition of the Brinkman extension, $(\mathcal{H}({w}), p) \in ({H}(D))^d \times L^2_0(D)$ satisfies
\begin{align*}
&\nabla p-\Delta \mathcal{H}({w})+\kappa^{-1}\mathcal{H}({w})=0,\,\mbox{ in } D,&\\
&\DIV\,\mathcal{H}({w})=\frac{\int_{\partial D} {w} \cdot {n}}{|D|} \, \mbox{ in } D,&\\
 & \mathcal{H}({w})={w},\,\mbox{ on } \partial D.&
\end{align*}
Denote ${v}= \mathcal{H}({w})- {w}$, then ${v}$ satisfies
\begin{align}
\label{eqn:errorProjection}
&\nabla p-\Delta {v}+\kappa^{-1}{v}=\Delta {w}-\kappa^{-1}{w},\,\mbox{ in } D,&\\
\notag
&\DIV {v}= \frac{\int_{\partial D} {w} \cdot {n}}{|D|} - \DIV {w}\, \mbox{ in } D,&\\
\notag
 &{v}=0,\,\mbox{ on } \partial D.&
\end{align}
Since $p\in L^{2}_{0}(D)$, by Lemma 11.2.3 in \cite{Brenner&Scott}, there exists ${\phi} \in ({H}^{1}_{0}(D))^d$ such that
\begin{align}\label{eqn:pDivv}
p=-\DIV {\phi}  \quad \mbox{ and } \quad
\normH{{\phi}}{D}\preceq\normL{p}{D}.
\end{align}
Multiplying Equation\eqref{eqn:errorProjection} by $\phi$ and integrating by parts, we obtain,
%
\[
\avrg{ p,\,p}{D}+\avrg{\nabla {v},\,\nabla{\phi}}{D}+\avrg{\kappa^{-1}{v},\,{\phi}}{D}
=-\avrg{\nabla {w},\,\nabla {\phi}}{D}-\avrg{\kappa^{-1}{w},\,{\phi}}{D}.
\]
Thus
\[
\avrg{ p,\,p}{D}= -\avrg{\nabla {v},\,\nabla{\phi}}{D}- \avrg{\kappa^{-1}{v},\,{\phi}}{D}
-\avrg{\nabla {w},\,\nabla {\phi}}{D}-\avrg{\kappa^{-1}{w},\,{\phi}}{D}.
\]
Using the Cauchy-Schwarz inequality and \eqref{eqn:pDivv}, we arrive at,
\begin{equation*}
  \begin{aligned}
 \normL{p}{D}^{2} 
&\preceq
(\normL{\nabla {v} }{D}+\norm{\kappa^{-1}{v}}_{H^{-1}(D)}+
(\normL{\nabla {w} }{D}+\norm{\kappa^{-1}{w}}_{H^{-1}(D)})\normL{p}{D}\\
&\preceq(\normL{\nabla {v} }{D}+M\norm{\kappa^{-\frac{1}{2}}{v}}_{L^{2}(D)}
+\normL{\nabla {w} }{D}+M\norm{\kappa^{-\frac{1}{2}}{w}}_{L^{2}(D)})\normL{p}{D}.
\end{aligned}
\end{equation*}
Then it yields the pressure estimate
\begin{align}\label{eqn:pressureProjection}
\normL{p}{D}\preceq&\normL{\nabla {v} }{D}+M\norm{\kappa^{-\frac{1}{2}}{v}}_{L^{2}(D)}
+\normL{\nabla {w} }{D}+M\norm{\kappa^{-\frac{1}{2}}{w}}_{L^{2}(D)}.
\end{align}
Multiplying Equation\eqref{eqn:errorProjection} by ${v}$ and integrating by parts, yields,
\begin{align*}
-\avrg{p,\DIV {v}}{D}+\avrg{\nabla {v},\nabla {v}}{D}+\avrg{\kappa^{-1}{v},{v}}{D}=\avrg{\Delta {w}-\kappa^{-1}{w},{v}}{D}.
\end{align*}
Using Cauchy-Schwarz inequality and the fact that $p$ has zero mean
on $D$, it follows that,
\begin{align*}
 &\avrg{\nabla {v},\nabla {v}}{D}+\avrg{\kappa^{-1}{v},{v}}{D}=\avrg{\Delta {w}-\kappa^{-1}{w},{v}}{D}+\avrg{p,\DIV\,{v}}{D}&\\
&=\avrg{\Delta {w}-\kappa^{-1}{w},{v}}{D}+\avrg{p,\DIV\,(\mathcal{H}({w})-{w})}{D}&\\
&=\avrg{\Delta {w}-\kappa^{-1}{w},{v}}{D} - \avrg{p,\DIV \, {w}}{D}&\\
&\leq \normL{\nabla {w} }{D}\normL{\nabla {v} }{D}+\normL{\kappa^{-\frac{1}{2}} {w}}{D}\normL{\kappa^{-\frac{1}{2}}{v}}{D}
+\normL{p}{D}\normL{\DIV {w}}{D}.&
\end{align*}
Inserting the pressure estimate \eqref{eqn:pressureProjection} and Young's inequality, we deduce
\begin{equation*}
\begin{aligned}
 \avrg{\nabla {v},\nabla {v}}{D}+\avrg{\kappa^{-1}{v},{v}}{D}
 &\leq \frac{1}{2\delta}(\normL{\nabla {w} }{D}^{2}+\normL{\kappa^{-\frac{1}{2}}{w}}{D}^{2})
+\frac{\delta}{2}\left(\normL{\nabla {v}}{D}^{2}+\normL{\kappa^{-\frac{1}{2}}{v}}{D}^{2}\right)\\
&+\frac{\delta}{2M}\left(\normL{\nabla {v} }{D}^{2}+M\norm{\kappa^{-\frac{1}{2}}{v}}_{L^{2}(D)}^{2}
+\normL{\nabla {w}}{D}^{2}\right.\\
& \left.+M\norm{\kappa^{-\frac{1}{2}}{w}}_{L^{2}(D)}^{2}\right)
+\frac{M}{2\delta}\normL{\DIV {w}}{D}^{2}.
\end{aligned}
\end{equation*}
Now the choice $\delta=\frac{1}{4}$ yields
\begin{align*}
\avrg{\nabla {v},\nabla {v}}{D}+\avrg{\kappa^{-1}{v},{v}}{D}
\preceq \normL{\nabla {w} }{D}^{2}+\normL{\kappa^{-\frac{1}{2}}{w}}{D}^{2}
+M\normL{\DIV {w}}{D}^{2}=\Tnorm{{w}}{D}^{2}.
\end{align*}
Recall that $ \mathcal{H}({w})={v}+{w}$. By triangle inequality, we have
\begin{align*}
 \avrg{\nabla \mathcal{H}({w}),\nabla \mathcal{H}({w})}{D}+\avrg{\kappa^{-1}\mathcal{H}({w}),\mathcal{H}({w})}{D}
\preceq\Tnorm{{w}}{D}^{2}.
\end{align*}
It suffices to show
\begin{align}
\label{eqn:LeftToBeShow}
 M^{\frac{1}{2}}\normL{\DIV \mathcal{H}({w})}{D}\preceq\Tnorm{{w}}{D}.
\end{align}
Indeed from the compatibility condition, we obtain: $\DIV \mathcal{H}({w})=\frac{1}{|D|}\int\limits_{D}\,\DIV{w}$. Hence,
\begin{align*}
 |\DIV \mathcal{H}({w})|=|\frac{1}{|D|}\int\limits_{D}\,\DIV {w}|\leq \frac{1}{|D|}\int\limits_{D}\,|\DIV {w}|
\leq\normL{\DIV {w}}{D}|D|^{-\frac{1}{2}},
\end{align*}
where in the last step we used Cauchy-Schwarz inequality. Consequently
\begin{align*}
 \normL{\DIV \mathcal{H}({w})}{D}^{2}\preceq \normL{\DIV {w}}{D}^{2}|D|^{-1}|D|=\normL{\DIV {w}}{D}^{2},
\end{align*}
This completes the proof.
\end{proof}

We are now ready to show the {\em inf-sup} condition in the space ${V}^{\text{off}} \times W^{\text{off}}$.
\begin{lemma}\label{lemma:Cea'sLemma_off}
For $\Tnorm{\cdot}{\Om}$ and $\Snorm{\cdot}{\Om}$ defined in \eqref{eqn:Tnorm} and \eqref{eqn:Snorm}, we
have the following inf-sup condition with inf-sup constant independent of the contrast
 \begin{align}
\label{eqn:Cea'sLemma_DiscreteForm}
 \sup\limits_{{v} \in {V}^{\text{off}}(\Om)\backslash\{0\}}\frac{\avrg{\DIVinThm {v},q}{\Om}}{\Tnorm{{v}}{\Om}}\succeq \Snorm{q}{\Om},\,\mbox{ for all } q\in W^{\mathrm{off}}(\Om).
\end{align}
\end{lemma}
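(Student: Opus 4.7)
The plan is to establish the offline inf-sup condition by a Fortin-type argument, transferring the discrete inf-sup \eqref{eqn:Cea'sLemma_Discrete_Triple} on the auxiliary pair ${V}_H(\Om) \times W_H(\Om)$ to the offline pair. Since $W^{\mathrm{off}} \subset W_H(\Om)$ (both consist of piecewise constants on $\mathcal{T}_H$), for any $q \in W^{\mathrm{off}}$ the condition \eqref{eqn:Cea'sLemma_Discrete_Triple} supplies a $v_H \in V_H(\Om)$ with $\Tnorm{v_H}{\Om} \preceq \Snorm{q}{\Om}$ and $\avrg{\DIV v_H, q}{\Om} \succeq \Snorm{q}{\Om}^2$. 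The task then reduces to replacing $v_H$ by some $v \in V^{\mathrm{off}}$ that preserves the divergence pairing against $W^{\mathrm{off}}$ and remains bounded in $\Tnorm{\cdot}{\Om}$.

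The central construction is a Fortin operator $\Pi: V_H(\Om) \to V^{\mathrm{off}}$. For $v_H \in V_H$, its trace $v_H|_{\mathcal{E}_H}$ on the coarse skeleton lies in $M_H^d$, so I expand it in the Lagrange basis $\mathcal{P}_H$ as $v_H|_{\mathcal{E}_H} = \sum_{\chi \in \mathcal{P}_H} (a_\chi \chi^x + b_\chi \chi^y)$ with $\chi^x = (\chi, 0)$ and $\chi^y = (0, \chi)$. I then define $\Pi v_H$ blockwise by $\Pi v_H|_D := \mathcal{H}(v_H|_{\partial D})$ for each coarse block $D$. By linearity of the Brinkman extension, $\Pi v_H = \sum_\chi \big(a_\chi \mathcal{H}(\chi^x) + b_\chi \mathcal{H}(\chi^y)\big)$, and by Remark \ref{rmk:constant_vector} each $\mathcal{H}(\chi^{x,y})$ (extended by zero off $\omega = \mathrm{supp}(\chi)$) already lies in $V^{\mathrm{off}}$, hence $\Pi v_H \in V^{\mathrm{off}}$. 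Continuity across coarse edges is automatic since $\Pi v_H$ shares its skeleton trace with $v_H$.

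Two Fortin properties then need verification. For divergence preservation, Definition \ref{Definition:Brinkman_ext} gives $\DIV \Pi v_H|_D = \frac{1}{|D|}\int_{\partial D} v_H \cdot n = \frac{1}{|D|}\int_D \DIV v_H$ on each block $D$, which is precisely the $L^2$ projection of $\DIV v_H$ onto piecewise constants; testing against $q \in W^{\mathrm{off}}$ and summing over blocks yields $\avrg{\DIV \Pi v_H, q}{\Om} = \avrg{\DIV v_H, q}{\Om}$. Stability $\Tnorm{\Pi v_H}{\Om} \preceq \Tnorm{v_H}{\Om}$ follows from Lemma \ref{thm:PropertyOfProjection} applied blockwise with $w = v_H|_D$ and summed over $D \in \mathcal{T}_H$. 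Combining, I obtain
\[
\sup_{v \in V^{\mathrm{off}}\setminus\{0\}} \frac{\avrg{\DIV v, q}{\Om}}{\Tnorm{v}{\Om}}
\geq \frac{\avrg{\DIV \Pi v_H, q}{\Om}}{\Tnorm{\Pi v_H}{\Om}}
= \frac{\avrg{\DIV v_H, q}{\Om}}{\Tnorm{\Pi v_H}{\Om}}
\succeq \frac{\Snorm{q}{\Om}^2}{\Tnorm{v_H}{\Om}}
\succeq \Snorm{q}{\Om}.
\]

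The main obstacle is justifying the membership $\Pi v_H \in V^{\mathrm{off}}$. This requires carefully unwinding Steps 1--3.2 of the offline construction to confirm that the Brinkman extensions of the multiscale partition of unity basis (in $V^{\mathrm{off}}$ by Remark \ref{rmk:constant_vector}) do cover, block by block, the whole coarse-skeleton trace space $M_H^d$. Three ingredients conspire to make this work: $\mathcal{P}_H$ is a Lagrange basis of $M_H$; the constants $(1,0)$ and $(0,1)$ added in Step 2.1 guarantee that pure partition-of-unity functions survive inside each $\widetilde{V}^\omega_{\mathrm{off}}$; and the blockwise Brinkman correction of Step 3.2 commutes with linear combinations, since the Brinkman extension is linear in its boundary data. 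Once this trace-spanning property is in place, the remaining estimates are standard Fortin manipulations.
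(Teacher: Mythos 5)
Your proof is correct and follows essentially the same route as the paper: the paper also passes from the auxiliary inf-sup condition \eqref{eqn:Cea'sLemma_Discrete_Triple} to the offline pair by replacing $v\in V_H(\Om)$ with its blockwise Brinkman extension $\mathcal{H}(v|_{\mathcal{E}_H})\in V^{\text{off}}$, noting $\avrg{\DIVinThm v, q}{D}=\avrg{\DIVinThm \mathcal{H}(v), q}{D}$ for piecewise-constant $q$, and invoking Lemma \ref{thm:PropertyOfProjection} for the stability bound $\Tnorm{\mathcal{H}(v)}{\Om}\preceq\Tnorm{v}{\Om}$. Your explicit justification that the Fortin image lands in $V^{\text{off}}$ (via the Lagrange basis expansion and the constant vectors of Step 2.1) is a point the paper merely asserts, so no gap.
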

\begin{proof}
First note $W^{\text{off}} = W_H(\Omega)$. By \eqref{eqn:Cea'sLemma_Discrete_Triple}, we have
\[
 \sup\limits_{{v}\in {V}_H(\Om)\backslash\{0\}}\,\frac{\langle\mathrm{div}\ {v} ,q\rangle_{\Om}}{\Tnorm{{v}}{\Om}}\succeq \Snorm{q}{\Om},\,\mbox{ for all } q\in W^{\text{off}}.
\]
For any ${v} \in {V}_H(\Omega)$, let $\mathcal{H}({v})$ be the Brinkman extension of
${v}|_{\mathcal{E}_H}$, i.e., $\mathcal{H}({v})$ takes the value of ${v}$ on the skeleton $\mathcal{E}_H$ and
is extend to the interior by Brinkman extension within each coarse block. Then
${v}|_{F} \in [P^2(F)]^2, \; \forall F \in \mathcal{E}_H$. According to the construction
of the offline space ${V}^{\text{off}}$ in Section 3, we have
\[
   \mathcal{H}({v}) \in {V}^{\text{off}}.
\]
Moreover, for any $q \in W^{\text{off}}$, $q$ is piecewise constant on each coarse block.
By combining this fact and the definition of Brinkman extension, we have
\[
\avrg{\DIVinThm {v} ,q}{D} = \avrg{\DIVinThm \mathcal{H}({v}) ,q}{D},
\]
for every coarse block $D$. Finally, we complete the proof by using Lemma \ref{thm:PropertyOfProjection}:
\begin{align*}
\Snorm{q}{\Om} &\preceq  \sup\limits_{{v} \in {V}_H(\Om)\backslash\{0\}}\frac{\avrg{\DIVinThm {v},q}{\Om}}{\Tnorm{{v}}{\Om}} = \sup\limits_{{v} \in {V}_H(\Om)\backslash\{0\}}\frac{\avrg{\DIVinThm \mathcal{H}({v}),q}{\Om}}{\Tnorm{{v}}{\Om}}\\
&\preceq  \sup\limits_{{v} \in {V}_H(\Om)\backslash\{0\}}\frac{\avrg{\DIVinThm \mathcal{H}({v}),q}{\Om}}{\Tnorm{\mathcal{H}({v})}{\Om}}
\preceq  \sup\limits_{{v} \in {V}^{\text{off}}(\Om)\backslash\{0\}}\frac{\avrg{\DIVinThm {v},q}{\Om}}{\Tnorm{{v}}{\Om}}.
\end{align*}

\end{proof}

Now by combining Lemma \ref{lemma:Cea'sLemma}, Lemma \ref{lemma:Cea'sLemma_off} and \eqref{eqn:coarsive_a},
we obtain the following stability result, by repeating the proof of Theorem 3.2 in \cite{XXX08}.

\begin{theorem}\label{thm:errorEstimate-offline}
Let $({u}, p) \in {V}(\Omega)\times W(\Omega)$ and $({u}_0, p_0) \in {V}^{\mathrm{off}}(\Om)\times
W^{\mathrm{off}}(\Om)$ be the Galerkin solutions of problem \eqref{eq:Brinkman.fs} and problem \eqref{eq:coarse_system} respectively. We have
 \begin{align}
\label{eqn:ErrForVelocity-offline}
 &\Tnorm{{u}-{u}_0}{\Om}\preceq \inf\limits_{{w} \in {V}^{\text{off}}(\Om)}\Tnorm{{u}-{w}}{\Om},&
\end{align}
\end{theorem}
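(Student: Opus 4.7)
The plan is to follow the classical Brezzi-type C\'ea argument for mixed methods, reducing the estimate to a coercive problem on the discrete kernel via the inf-sup condition. The essential structural input is the inclusion $Z^{\text{off}} \subset Z$ recorded in \eqref{eqn:coarsive_a}: it is a consequence of the built-in property $\nabla \cdot V^{\text{off}} \subset W^{\text{off}}$ enforced by the Brinkman-extension step of Section \ref{sec:off}. Indeed, if $v \in V^{\text{off}}$ satisfies $b(v,q) = 0$ for every $q \in W^{\text{off}}$, then choosing $q = \nabla \cdot v \in W^{\text{off}}$ gives $\nabla \cdot v \equiv 0$, so $v \in Z$.

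Since $V^{\text{off}} \times W^{\text{off}} \subset V(\Om) \times W(\Om)$, Galerkin orthogonality holds: $a(u-u_0,v) + b(v,p-p_0) = 0$ for all $v \in V^{\text{off}}$, and $b(u-u_0,q) = 0$ for all $q \in W^{\text{off}}$. Fix any $w \in V^{\text{off}}$. The first technical step is to correct $w$ so that $u-w$ is $b$-orthogonal to $W^{\text{off}}$. Applying the discrete inf-sup condition (Lemma \ref{lemma:Cea'sLemma_off}) to the functional $q \mapsto b(u-w,q)$, whose norm on $(W^{\text{off}}, \Snorm{\cdot}{\Om})$ is bounded by $\Tnorm{u-w}{\Om}$, produces $\tilde{w} \in V^{\text{off}}$ with $b(\tilde{w},q) = b(u-w,q)$ for all $q \in W^{\text{off}}$ and $\Tnorm{\tilde{w}}{\Om} \preceq \Tnorm{u-w}{\Om}$. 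Setting $w^{*} := w + \tilde{w}$, the triangle inequality yields $\Tnorm{u-w^{*}}{\Om} \preceq \Tnorm{u-w}{\Om}$ while $b(u-w^{*},q) = 0$ for every $q \in W^{\text{off}}$.

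Now define $e := u_0 - w^{*} \in V^{\text{off}}$. The identity $b(e,q) = b(u_0 - u,q) + b(u - w^{*},q) = 0$ for every $q \in W^{\text{off}}$ places $e \in Z^{\text{off}} \subset Z$, so $\nabla \cdot e \equiv 0$. This yields two payoffs simultaneously: the pressure contribution in the Galerkin orthogonality vanishes, giving $a(u-u_0,e) = 0$, and the coercivity $a(e,e) \succeq \Tnorm{e}{\Om}^{2}$ from \eqref{eqn:coarsive_a} becomes available. Combining these with the continuity $a(\varphi,\psi) \leq \Tnorm{\varphi}{\Om}\Tnorm{\psi}{\Om}$, which follows directly from Cauchy--Schwarz on the two ingredients of $a$, I obtain
\begin{equation*}
\Tnorm{e}{\Om}^{2} \preceq a(e,e) = a(u - w^{*}, e) + a(u_0 - u, e) = a(u - w^{*}, e) \leq \Tnorm{u-w^{*}}{\Om}\,\Tnorm{e}{\Om},
\end{equation*}
hence $\Tnorm{e}{\Om} \preceq \Tnorm{u-w^{*}}{\Om} \preceq \Tnorm{u-w}{\Om}$. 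The triangle inequality $\Tnorm{u-u_0}{\Om} \leq \Tnorm{u-w^{*}}{\Om} + \Tnorm{e}{\Om}$ followed by infimum over $w \in V^{\text{off}}$ then gives \eqref{eqn:ErrForVelocity-offline}. I expect the main obstacle to be the divergence-correction step and the justification that the constructed $\tilde{w}$ actually lies in $V^{\text{off}}$ with the claimed bound; the rest is standard once $Z^{\text{off}} \subset Z$ is in hand from the final Brinkman-extension step in Section \ref{sec:off}.
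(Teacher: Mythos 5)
Your argument is correct and is essentially the paper's own approach: the paper proves this theorem by simply combining Lemma \ref{lemma:Cea'sLemma}, Lemma \ref{lemma:Cea'sLemma_off} and the kernel property \eqref{eqn:coarsive_a} and then invoking the standard mixed-method quasi-optimality proof (Theorem 3.2 of the cited reference), which is exactly the Brezzi/C\'ea argument you have written out in full --- divergence correction via the discrete inf-sup, coercivity on $Z^{\text{off}}\subset Z$, Galerkin orthogonality, and the triangle inequality. No substantive differences to report.
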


\subsection{Convergence results}
Now we  derive an error estimate for our method.
To this end, we first give several basic estimates on the Brinkman extension.
\begin{lemma}\label{thm:error1}
For each partition of unity function $\chi_i$ with support $\omega_i$, let $({u}_c, p_c)
\in ({H}^1(\omega_i))^d \times L^2_0(\omega_i)$ solve
\begin{alignat*}{5}
\nabla p_c-\Delta {u}_c +&\kappa^{-1} {u}_c& =&0& \mbox{ in } \omega_i,\\
 &\DIVinThm {u}_c& =& \frac{\int_{\partial\omega_i}\,{g} \cdot n}{|\omega_i|}& \mbox{ in } \omega_i ,\\
&\quad\;{u}_c&=& {g} &\;\mbox{ on } \partial\omega_i.
\end{alignat*}
Then the following a priori estimate holds
\begin{equation}\label{eqn:cacciopoli_pre}
 \int_{\omega_i}\,\chi_{i}^2 |\nabla {u}_c|^2+\int_{\omega_i}\,\kappa^{-1}\chi_{i}^2|{u}_c|^2
\preceq\int_{\omega_i}\,|\nabla\chi_{i}|^2 |{u}_c|^2
+\int_{\omega_i}\,\kappa^{-2}|{u}_c|^2+\int_{\omega_i}\,{|\DIVinThm {u}_c}|^2+\normL{p_c}{\omega_i}^{2}.
\end{equation}
\end{lemma}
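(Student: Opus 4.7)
The plan is to prove this Caccioppoli-type estimate by testing the Brinkman momentum equation against $\chi_i^2 u_c$, which vanishes on $\partial\omega_i$ and hence belongs to $(H_0^1(\omega_i))^d$. Starting from
\[
\nabla p_c-\Delta u_c+\kappa^{-1}u_c=0\quad\text{in }\omega_i,
\]
I would take the $L^2(\omega_i)$ inner product of both sides with $\chi_i^2 u_c$ and integrate by parts. Because $\chi_i=0$ on $\partial\omega_i$, no boundary contributions appear, and one obtains the identity
\[
\int_{\omega_i}\chi_i^2|\nabla u_c|^2+\int_{\omega_i}\kappa^{-1}\chi_i^2|u_c|^2
=-2\int_{\omega_i}\chi_i(\nabla u_c)^{T}\nabla\chi_i\cdot u_c
+2\int_{\omega_i}p_c\,\chi_i\nabla\chi_i\cdot u_c
+\int_{\omega_i}p_c\,\chi_i^2\,\mathrm{div}\,u_c,
\]
after expanding $\nabla(\chi_i^2 u_c)=2\chi_i u_c\otimes\nabla\chi_i+\chi_i^2\nabla u_c$ and $\mathrm{div}(\chi_i^2 u_c)=2\chi_i\nabla\chi_i\cdot u_c+\chi_i^2\,\mathrm{div}\,u_c$. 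This identity places the two desired left-hand side quantities together naturally; the remaining task is to bound each of the three terms on the right.

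Next I would apply Cauchy--Schwarz and Young's inequality term by term, exploiting $|\chi_i|\le 1$. For the gradient-cross term, the bound
$
|2\chi_i(\nabla u_c)^{T}\nabla\chi_i\cdot u_c|\le\tfrac{1}{2}\chi_i^2|\nabla u_c|^2+2|\nabla\chi_i|^2|u_c|^2
$
splits off a fraction of $\int\chi_i^2|\nabla u_c|^2$ that will be absorbed into the left-hand side. For the first pressure term, $|2p_c\chi_i\nabla\chi_i\cdot u_c|\le \|p_c\|_{L^2(\omega_i)}^2\cdot(\text{weight})+\int|\nabla\chi_i|^2|u_c|^2$ after another application of Young's and $\chi_i^2\le 1$. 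For the divergence-pressure term, the straightforward bound gives $\tfrac{1}{2}\|p_c\|_{L^2(\omega_i)}^2+\tfrac{1}{2}\int|\mathrm{div}\,u_c|^2$. Collecting these yields precisely the terms $\int|\nabla\chi_i|^2|u_c|^2$, $\|p_c\|_{L^2(\omega_i)}^2$ and $\int|\mathrm{div}\,u_c|^2$ that appear on the right-hand side of \eqref{eqn:cacciopoli_pre}, and the $\int\kappa^{-2}|u_c|^2$ term can be included either trivially as a non-negative quantity that relaxes the inequality, or is produced naturally if one instead treats $-\kappa^{-1}u_c$ as a forcing term in a Stokes-type estimate and applies Young's inequality $|\kappa^{-1}u_c\cdot\chi_i^2 u_c|\le\tfrac{1}{2}\kappa^{-2}|u_c|^2+\tfrac{1}{2}\chi_i^4|u_c|^2$ before absorption.

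The final step is absorption: moving $\tfrac{1}{2}\int\chi_i^2|\nabla u_c|^2$ (produced by the gradient-cross Young estimate) back to the left-hand side, doubling the remaining constants, and concluding the proof.

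The main obstacle I expect is book-keeping on the constants when choosing the Young's inequality parameters so that both $\int\chi_i^2|\nabla u_c|^2$ on the left is preserved with a positive coefficient after absorption, and the $\kappa^{-1}$-weighted $L^2$ term on the left is never absorbed by any right-hand side quantity involving $\chi_i^2|u_c|^2$. This requires weighting pressure/gradient cross terms exclusively against $|\nabla\chi_i|^2|u_c|^2$ and $\|p_c\|^2$, never against $\chi_i^2|u_c|^2$, which is the reason for systematically using $\chi_i^2\le 1$ to convert $\chi_i^2|\nabla\chi_i|^2|u_c|^2$ to $|\nabla\chi_i|^2|u_c|^2$ at the cost of a harmless multiplicative constant hidden in the $\preceq$ notation.
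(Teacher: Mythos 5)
Your proposal is correct and follows essentially the same route as the paper: test the momentum equation with $\chi_i^2 u_c$, integrate by parts using $\chi_i|_{\partial\omega_i}=0$, expand $\nabla(\chi_i^2u_c)$ and $\mathrm{div}(\chi_i^2u_c)$ to obtain the same energy identity, then apply Cauchy--Schwarz and Young's inequality (the paper uses $\delta=\tfrac14$) and absorb the $\int\chi_i^2|\nabla u_c|^2$ contribution into the left-hand side. Your observation that the $\int\kappa^{-2}|u_c|^2$ term on the right is not actually needed by this argument is also consistent with the paper's proof, which likewise never produces it.
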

\begin{proof}
Multiplying the equation by $\chi_{i}^2 {u}_c$ yields
\[
 -\avrg{p_c,\DIV(\chi_i^2 {u}_c)}{\omega_i}+\avrg{\nabla {u}_c,\nabla (\chi_{i}^2{u}_c)}{\omega_i}
+\avrg{\kappa^{-1} {u}_c, \chi_{i}^2 {u}_c}{\omega_i}=0.
\]
Some simple algebraic manipulations give
\begin{align*}
\int\limits_{\omega_i}\, \chi_{i}^2 &|\nabla {u}_c|^2
+\int\limits_{\omega_i}\,\kappa^{-1}\chi_{i}^2 {u}_c^2 =  \avrg{p_c,2\chi_{i}\nabla\chi_{i} \cdot {u}_c}{\omega_i}
+\avrg{p_c, \chi_{i}^{2}\DIV {u}_c}{\omega_i}- \avrg{\nabla {u}_c ,2\chi_{i} \nabla \chi_{i} \cdot {u}_c}{\omega_i},&\\
&\preceq \normL{p_c}{\omega_i}(\normL{\nabla \chi_{i}\cdot {u}_c}{\omega_i}+\normL{\DIV  {u}_c}{\omega_i})
+\normL{\chi_{i}\nabla {u}_c}{\omega_i}  \normL{\nabla \chi_{i}\cdot {u}_c}{\omega_i},&\\
&\leq\frac{\delta}{2} (\normL{p_c}{\omega_i}^{2}+\normL{\chi_{i}\nabla {u}_c}{\omega_i}^{2})
+\frac{1}{2\delta}(\normL{\nabla\chi_{i}\cdot {u}_c}{\omega_i}^{2} +\normL{\DIV {u}_c}{\omega_i}^2).&
\end{align*}
Taking $\delta=\frac{1}{4}$, we obtain the desired inequality.
\end{proof}
\begin{lemma}\label{thm:error2}
Let $\omega_i \subset \mathcal{T}_H$ be an arbitrary coarse neighborhood. Let $({u}_N, p_N) \in ({H}^1_{0}(\omega_i))^d \times L^2_0(\omega_i)$ solve
\begin{alignat*}{5}
\nabla p_N -\Delta {u}_N+&\kappa^{-1} {u}_N& =&{f}& \mbox{ in } \omega_i,\\
&\DIVinThm {u}_N&=&0& \mbox{ in } \omega_i,  \\
&\quad\;{u_{N}}&=&0& \mbox{ on } \partial \omega_i.
\end{alignat*}
Then there holds
\begin{align}\label{eqn:dirichletResult}
\Tnorm{{u}_N}{\omega_i}\preceq H\normL{{f}}{\omega_i}.
\end{align}
\end{lemma}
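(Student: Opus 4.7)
The plan is to prove the estimate by a standard energy argument combined with the Poincaré inequality on the coarse neighborhood $\omega_i$, whose diameter is of order $H$.

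First I would test the Brinkman equation with the solution $u_N$ itself. Because $u_N \in ({H}^1_0(\omega_i))^d$ vanishes on $\partial\omega_i$ and satisfies $\DIV u_N = 0$ in $\omega_i$, integration by parts makes the pressure contribution $\avrg{\nabla p_N, u_N}{\omega_i} = -\avrg{p_N, \DIV u_N}{\omega_i} = 0$ vanish. This reduces the identity to
\[
\avrg{\nabla u_N, \nabla u_N}{\omega_i} + \avrg{\kappa^{-1} u_N, u_N}{\omega_i} = \avrg{f, u_N}{\omega_i}.
\]
Since $\DIV u_N = 0$, by the definition \eqref{eqn:Tnorm} of the triple norm we have $\Tnorm{u_N}{\omega_i}^2 = a(u_N, u_N)$, so the left-hand side is exactly $\Tnorm{u_N}{\omega_i}^2$.

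Next I would bound the right-hand side by Cauchy–Schwarz, $\avrg{f, u_N}{\omega_i} \le \normL{f}{\omega_i} \normL{u_N}{\omega_i}$, and then apply the Poincaré inequality on $\omega_i$. Since $u_N$ vanishes on $\partial\omega_i$ and $\mathrm{diam}(\omega_i) \preceq H$, we obtain
\[
\normL{u_N}{\omega_i} \preceq H \normL{\nabla u_N}{\omega_i} \le H\, \Tnorm{u_N}{\omega_i}.
\]
Combining the two inequalities yields $\Tnorm{u_N}{\omega_i}^2 \preceq H \normL{f}{\omega_i} \Tnorm{u_N}{\omega_i}$, from which the conclusion $\Tnorm{u_N}{\omega_i} \preceq H \normL{f}{\omega_i}$ follows after dividing by $\Tnorm{u_N}{\omega_i}$ (the case $u_N \equiv 0$ being trivial).

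There is no real obstacle here; the proof is almost immediate from the divergence-free, zero-boundary-data structure together with Poincaré. The only small point to be careful about is to confirm that the Poincaré constant on $\omega_i$ scales like $H$ independently of the coefficient $\kappa$ (which is clear since Poincaré depends only on the geometry of $\omega_i$), so the resulting constant is indeed contrast-independent, consistent with the $\preceq$ notation used throughout Section~\ref{Analysis}.
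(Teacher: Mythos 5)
Your proof is correct and follows essentially the same route as the paper: test the equation with $u_N$ so that the pressure term drops out by the divergence-free and zero-boundary conditions, then apply Cauchy--Schwarz and the Poincar\'e inequality with constant $\preceq H$ on $\omega_i$. The only cosmetic difference is that the paper closes with Young's inequality where you divide by $\Tnorm{u_N}{\omega_i}$; both are standard and equivalent here.
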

\begin{proof}
By multiplying the first equation by ${u}_N$, integrating by parts and  the divergence free property of ${u}_N$, we obtain
\begin{align*}
\normL{\nabla {u}_N}{\omega_i}^{2}+\norm{\kappa^{-\frac{1}{2}}{u}_N}_{L^{2}(\omega_i)}^{2}=\avrg{{f},\,{u}_N}{\omega_i}.
\end{align*}
In view of the boundary condition, we can apply Poincar\'{e}'s inequality,
\[
\normL{{u}_N}{\omega_i}\preceq H\normL{\nabla {u}_N}{\omega_i}.
\]
Thus
\begin{align*}
\normL{\nabla {u}_N}{\omega_i}^{2}+\norm{\kappa^{-\frac{1}{2}}{u}_N}_{L^{2}(\omega_i)}^{2}=\avrg{{f},\,{u}_N}{\omega_i}\preceq H\normL{\nabla {u}_N}{\omega_i}\normL{{f}}{\omega_i}.
\end{align*}
Finally, we complete the proof by the young's inequality.
\end{proof}
Now we are ready to state our main error estimate.
\begin{theorem}\label{thm:finalResult}
Let $\Lambda_*=\min\limits_{\omega_i}\lambda^{\omega_i}_{L_i+1}$. Then
\begin{align*}
  \Tnorm{u-u_0}{\Omega}^{2}\preceq\frac{1}{\Lambda_*}\normL{\nabla  u}{\Omega}^2+H^2\normL{f(x)}{\Omega}^2+\normL{p_c}{\Omega}^{2}.
\end{align*}
where $p_c$ is defined by \eqref{eqn:res_pre} below.

\end{theorem}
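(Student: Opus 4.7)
The plan is to invoke Theorem \ref{thm:errorEstimate-offline}, which reduces the estimate to exhibiting a single competitor $w \in V^{\text{off}}$ whose distance from $u$ in the triple norm is bounded by the three terms on the right-hand side. The natural candidate comes from combining the Brinkman-harmonic/Dirichlet decomposition of $u$ on each coarse neighborhood $\omega_i$ with the local spectral projection onto $\widetilde{V}^{\omega_i}_{\text{off}}$. On each $\omega_i$ I split $u|_{\omega_i} = u_N^i + u_c^i$, where $u_N^i \in (H^1_0(\omega_i))^d$ solves the Brinkman problem on $\omega_i$ with source $f$ and zero boundary, and the residual $u_c^i$ is the Brinkman-harmonic extension of $u|_{\partial\omega_i}$, whose associated pressure is the $p_c$ in \eqref{eqn:res_pre}. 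Lemma \ref{thm:error2} already yields $\Tnorm{u_N^i}{\omega_i} \preceq H\normL{f}{\omega_i}$, which will feed the $H^2\normL{f}{\Omega}^2$ term.

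Let $\Pi_i u_c^i$ be the projection of $u_c^i$ onto the first $L_i$ eigenvectors from \eqref{eqn:eig}. Since these eigenvectors are simultaneously $A$- and $S$-orthogonal, the Rayleigh quotient argument gives a spectral estimate of the form $\Snorm{u_c^i - \Pi_i u_c^i}{\omega_i}^2 \preceq (\lambda^{\omega_i}_{L_i+1})^{-1} A(u_c^i,u_c^i) \leq \Lambda_*^{-1} A(u_c^i,u_c^i)$. I then assemble $w \in V^{\text{off}}$ block by block by $w|_K = \mathcal{H}\bigl(\sum_{i:\,K\subset\omega_i}\chi_i\,\Pi_i u_c^i|_{\partial K}\bigr)$, with the Brinkman extension performed on $K$. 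By the construction in Section \ref{sec:off} this $w$ lies in $V^{\text{off}}$, and its trace on the skeleton equals $\sum_i \chi_i \Pi_i u_c^i$ thanks to $\sum_i\chi_i\equiv 1$ on $\mathcal{E}_H$ from \eqref{eq: partition}.

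To bound $\Tnorm{u-w}{K}$ on a coarse block $K$ I would again peel off a local Dirichlet part, writing $u|_K = \mathcal{H}(u|_{\partial K}) + \tilde u_K$ with $\Tnorm{\tilde u_K}{K} \preceq H\normL{f}{K}$ by Lemma \ref{thm:error2}, so that $u-w = \tilde u_K + \mathcal{H}((u-w)|_{\partial K})$. The boundary residual then equals $\sum_i \chi_i\bigl(u_N^i + (u_c^i - \Pi_i u_c^i)\bigr)$, and Lemma \ref{thm:PropertyOfProjection} allows me to replace the Brinkman extension by the triple norm of any $H^1$-lift of this trace. Expanding $|\nabla(\chi_i v)|^2$ by the product rule and invoking the Caccioppoli-type bound of Lemma \ref{thm:error1} absorbs the $\chi_i^2|\nabla(u_c^i-\Pi_i u_c^i)|^2$ contribution into terms involving $|\nabla\chi_i|^2|u_c^i-\Pi_i u_c^i|^2$, the divergence, and $\normL{p_c}{\omega_i}^2$. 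The spectral bound converts the remaining weighted $L^2$-piece into $\Lambda_*^{-1}\normL{\nabla u_c^i}{\omega_i}^2 \preceq \Lambda_*^{-1}\normL{\nabla u}{\omega_i}^2$ after reabsorbing the small $u_N^i$ contribution, and summing over $K$ with bounded overlap of the $\omega_i$ assembles the global estimate.

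The main obstacle is precisely this coupling in the block-wise step: the spectral problem naturally controls a weighted $L^2$-norm of the projection error by a weighted $H^1$-norm of $u_c^i$, whereas the error created by Lemmas \ref{thm:PropertyOfProjection} and \ref{thm:error1} lives in the triple norm and involves the $|\nabla\chi_i|^2 \sim H^{-2}$ blow-up near coarse interfaces. Lemma \ref{thm:error1} is exactly the bridge that turns the Caccioppoli gain into an $S$-type quantity consumable by the spectral bound; carrying this bridge through while keeping every constant independent of the contrast -- reconciling the $\kappa^{-1}$ weight built into $\Tnorm{\cdot}{\Om}$ with the weight used in $A$ and $S$ -- is the delicate bookkeeping at the heart of the argument.
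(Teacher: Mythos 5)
Your proposal follows essentially the same route as the paper's proof: the same competitor built from Brinkman extensions of $\chi_i$ times the local spectral projections of $u$, the same reduction via Theorem \ref{thm:errorEstimate-offline}, and the same chain of Lemma \ref{thm:error2} (for the local Dirichlet residual), Lemma \ref{thm:PropertyOfProjection}, Lemma \ref{thm:error1}, and the spectral eigenvalue bound to produce the three terms on the right-hand side. The only cosmetic differences are that you assemble the competitor block by block rather than neighborhood by neighborhood and state the spectral estimate with $A(u_c^i,u_c^i)$ in place of the paper's $A(u-I^0u,u-I^0u)$; both yield the same $\Lambda_*^{-1}\normL{\nabla u}{\Omega}^2$ contribution.
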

\begin{proof}
In view of the linearity of the equation \eqref{eq:Brinkman.fs}, on each coarse neighborhood $\omega_i \subset \mathcal{T}_H$, $u$ can be decomposed
into $u=\mathcal{H}({u})+{u}_N$, where $\mathcal{H}({u})$ is the Brinkman extension of ${u}$ and ${u}_N$ is the
residual in Lemma \ref{thm:error2}. For each $\chi_i$, let $I^{0}{u}$ be the
local interpolant of ${u}$ in the local offline space $\tilde{{V}}^{\omega_i}_{\text{off}}$. Then there exists $p_c\in L^{2}(\omega_i)$, s.t.
\begin{alignat}{5}
\label{eqn:res_pre}
\nabla p_c-\Delta ({u}-I^{0}{u}) +&\kappa^{-1} ({u}-I^{0}{u})& =&0& \mbox{ in } \omega_i,\\
\notag
 &\DIVinThm ({u}-I^{0}{u})& =& \frac{\int_{\partial\omega_i}\,{h_i} \cdot n}{|\omega_i|}& \mbox{ in } \omega_i ,\\
\notag
&\quad\;({u}-I^{0}{u})&=& {h_i} &\;\mbox{ on } \partial\omega_i,
\end{alignat}
since $I^{0}{u}$ equals 0 over $\partial\omega_i$ (the support of $\chi_i$ is $\omega_i$) and each basis in $\tilde{{V}}^{\omega_i}_{\text{off}}$ has the properties of divergence constant. Here, $h_i$ denotes the boundary value of ${u}-I^{0}{u}$ over $\partial\omega_i$.

By the construction of the offline space $V^\text{off}$, $\mathcal{H}(\chi_i I^0 {u}) \in {V}^{\text{off}}$. By
Theorem \ref{thm:errorEstimate-offline}, we have
\begin{align*}
 \Tnorm{{u}-{u_0}}{\Om}^{2}&\preceq\inf\limits_{{v} \in {V}^{\text{off}}}\,\Tnorm{{u}-{v}}{\Om}^{2}\\
  &\preceq\TnormLer{{u}-\sum^{N_C}_{i=1}\mathcal{H}(\chi_{i}I^{0}{u})}{\Om}^{2}\preceq\TnormLer{\mathcal{H}({u})-
    \sum^{N_c}_{i=1}\mathcal{H}(\chi_{i}I^{0}{u})}{\Om}^{2}+\Tnorm{{u}_N}{\Om}^{2},\\
  &\preceq\TnormLer{\mathcal{H}(\sum^{N_c}_{i=1}\chi_{i}{u})-\sum^{N_c}_{i=1}\mathcal{H}(\chi_{i}I^{0}{u})}{\Om}^{2}+H^2\normL{f(x)}{\omega_i}^2.
\end{align*}
Here the last step follows from the estimate in Lemma \ref{thm:error2}.
For the first term, we have
\begin{align*}
&\TnormLer{\mathcal{H}(\sum^{N_c}_{i=1}\chi_{i}{u})-\sum^{N_c}_{i=1}\mathcal{H}(\chi_{i}I^{0}{u})}{\Om}^{2}
=\TnormLer{\mathcal{H}(\sum^{N_c}_{i=1}(\chi_{i}{u} - \chi_{i}I^{0}{u})}{\Om}^{2}&\\
&\preceq \sum_{i=1}^{N_c} \Tnorm{\mathcal{H}(\chi_i {u} - \chi_i I^0 {u})}{\omega_i}^{2} \preceq \sum_{i=1}^{N_c} \Tnorm{\chi_i ({u} - I^0 {u})}{\omega_i}^{2}, &
\end{align*}
where at the last step we have applied Lemma \ref{thm:PropertyOfProjection} on each coarse neighborhood $\omega_i$.
Consequently,
 \begin{align*}
 \Tnorm{{u} -{u}_0}{\Om}^{2}&\preceq
 \sum\limits_{i=1}^{N_c}\Tnorm{\chi_{i}({u}-I^{0}{u})}{\omega_i}^{2}+H^2\normL{{f}}{\omega_i}^2&\\
&\preceq\sum\limits^{N_c}_{i=1}\int\limits_{\omega_i}\,\chi_{i}^2|\nabla({u}-I^{0}{u})|^2
+\int\limits_{\omega_i}\,\kappa^{-1}\chi_{i}^2|{u}-I^{0}{u}|^2&\\
&+M\int\limits_{\omega_i}\,\chi_{i}^2|\DIV ({u}- I^{0}{u})|^2+M\int\limits_{\omega_i}|\nabla \chi_{i}|^2|{u}-I^{0}{u}|^2+H^2\normL{{f}}{\omega_i}^2.&
\end{align*}
By applying Lemma \ref{thm:error1} to the term ${u} - I^{0} {u}$ in Eqn. \eqref{eqn:res_pre}, we deduce
\begin{align*}
 \Tnorm{{u}- {u}_0}{\Omega}^{2}
 \preceq &\sum\limits_{i}M\int_{\omega_i}|\nabla\chi_{i}|^2|{u}-I^{0}{u}|^2+\int_{\omega_i}\,(\kappa^{-1})^2|{u} - I^{0} {u}|^2&\\
&+M\int_{\omega_i}\, \chi_{i}^2|\DIV{({u}-I^{0}{u})}|^2+H^2\normL{{f}}{\omega_i}^2+\normL{p_c}{\omega_i}^{2}.&
\end{align*}
%

%
%
Finally, using the spectral problem \eqref{eqn:eig}, with $A$ and $S$ defined by
\begin{equation}
\label{eqn:eig_analysis}
  \begin{aligned}
    \displaystyle A &= [a_{mn}] = \int_{\omega_i} (\chi_i)^2 \nabla {\psi}_m^{\omega,\text{snap}} \cdot \nabla {\psi}_n^{\omega,\text{snap}},\\
  \displaystyle S &= [s_{mn}] = \int_{\omega_i}(\kappa(x)^{-2} +M (\nabla\chi_i)^2) {\psi}_m^{\omega,\text{snap}}\cdot {\psi}_n^{\omega,\text{snap}}+
M\int_{\omega_i} (\chi_i)^2 \DIV {\psi}_m^{\omega,\text{snap}}  \DIV {\psi}_n^{\omega,\text{snap}},
   \end{aligned}
 \end{equation}
we have
\begin{align*}
&\int_{\omega_i}\,M(\nabla\chi_{i})^2|{u}-I^{0}{u}|^2+\int_{\omega_i}\,(\kappa^{-1})^2|{u}-I^{0}{u}|^2+M\int_{\omega_i}\,(\chi_{i})^2|\DIV{({u}-I^{0}{u})}|^2&\\
&\leq\frac{1}{\lambda^{\omega_i}_{L_i+1}}\int_{\omega_i}\,(\chi_{i})^2|\nabla({u}-I^{0}{u})|^2.&
\end{align*}
Hence,
\begin{align*}
 \Tnorm{u-u_0}{\Omega}^{2}\preceq\sum\limits_{i}\frac{1}{\lambda^{\omega_i}_{L_i+1}}\int_{\omega_i}\,
(\chi_{i})^2|\nabla(u-I^{0}u)|^2+H^2\normL{f(x)}{\omega_i}^2+\normL{p_c}{\omega_i}^{2}.
\end{align*}
Upon denoting $\Lambda_*=\min\limits_{\omega_i}\lambda^{\omega_i}_{L_i+1}$, we deduce
\begin{align*}
  \Tnorm{u-u_0}{\Omega}^{2}\preceq\frac{1}{\Lambda_*}\sum\limits_{i}\int_{\omega_i}\,
(\chi_{i})^2|\nabla(u-I^{0}u)|^2+H^2\normL{f(x)}{\omega_i}^2+\normL{p_c}{\omega_i}^{2}.
\end{align*}
Using the inequality $\normL{\nabla I^{0} u}{\omega_i}\preceq \normL{\nabla  u}{\omega_i}$,
\begin{align*}
  \Tnorm{u-u_0}{\Omega}^{2}\preceq\frac{1}{\Lambda_*}\sum\limits_{i}\normL{\nabla  u}{\omega_i}^2+H^2\normL{f(x)}{\omega_i}^2+\normL{p_c}{\omega_i}^{2},
\end{align*}
and thus
\begin{align*}
  \Tnorm{u-u_0}{\Omega}^{2}\preceq\frac{1}{\Lambda_*}\normL{\nabla  u}{\Omega}^2+H^2\normL{f(x)}{\Omega}^2+\normL{p_c}{\Omega}^{2}.
\end{align*}
This completes the proof of the theorem.
\end{proof}
\begin{remark}
We note that in the analysis, we have used the spectral problem \eqref{eqn:eig_analysis}, instead of \eqref{eqn:eig}
in the numerical simulation. In view of the inequality $\normL{\DIV u}{D}\leq \normL{\nabla u}{D}$ for any
$u\in (H^{1}(D))^d$ and the fact that $\chi_i$ is bounded, these two spectral problems are equivalent provided
that $M$ is bounded. Hence our analysis does provide partial justification for the algorithm. The constant $M$
appears as a result of the definition of the velocity and pressure norms, cf. \eqref{eqn:Tnorm} and \eqref{eqn:Snorm},
which is needed for the inf-sup condition.
It remains unclear how to get rid of the constant $M$ in the norm definition in the convergence analysis.
\end{remark}

\section{Conclusion}
\label{sec:conclusion}
In this work, we have developed a mixed generalized multiscale finite element method for
the Brinkman flow in high-contrast media, which is able to capture both the Stokes flow and
the Darcy flow in respective regions. In the fine grid, we approximate the
velocity and pressure with piecewise quadratic and piecewise constant functions.
We develop a novel approach to construct a coarse approximation for the velocity
snapshot space, and a robust low-dimensional offline space for the velocity.
The stability of the mixed GMsFEM and a priori error estimates are derived. The
two-dimensional numerical examples illustrate clearly the robustness and efficiency
of the method.

In our discussion, we have focused on the approximation of the velocity space, and
simply taken the piecewise constant space for the pressure. This may not be the
best choice, as can be seen from Thm \ref{thm:finalResult}. The mixed finite
element space may get better results with a better pressure space and accordingly
an enriched velocity space. We leave the enriching of pressure space to a future work.
Further, it is natural to extend the proposed method to
the Stokes model in perforated domains.

\section{Acknowledgements}
 G. Li's research is partially supported by DOE. 

\bibliographystyle{siam}
\bibliography{references}

\end{document}